\numberwithin{equation}{section}
\theoremstyle{plain}
\newtheorem{theorem}{Theorem}[section]
\newtheorem{corollary}[theorem]{Corollary}
\newtheorem{proposition}[theorem]{Proposition}
\newtheorem{lemma}[theorem]{Lemma}
\theoremstyle{definition}
\newtheorem{definition}[theorem]{Definition}
\newtheorem{example}[theorem]{Example}
\newtheorem{oss}[theorem]{Remark}
\newcommand{\de}{\partial}
\newcommand\C{{\mathbb C}}
\newcommand\R{{\mathbb R}}
\newcommand{\ov}[1]{\overline{ #1}}
\newcommand{\N}{\nabla}
\renewcommand{\bar}[1]{\overline{#1}}
\newcommand{\bk}{\overline{k}}
\newcommand{\br}{\overline{r}}
\newcommand{\g}{\mathfrak{g}}
\begin{document}
\title[The pluriclosed flow on nilmanifolds]{The pluriclosed flow on nilmanifolds and Tamed symplectic forms}
\author{Nicola Enrietti, Anna Fino  and Luigi Vezzoni}
\date{\today}
\subjclass[2000]{Primary 53C15 ; Secondary 53B15, 53C30}
\keywords{Hermitian metrics, symplectic forms, nilpotent Lie groups}
\address{Dipartimento di Matematica G. Peano \\ Universit\`a di Torino\\
Via Carlo Alberto 10\\
10123 Torino\\ Italy} \email{nicola.enrietti@unito.it}
 \email{annamaria.fino@unito.it}
 \email{luigi.vezzoni@unito.it}

\thanks{Research partially supported by GNSAGA (Indam) of Italy.}
\maketitle
\begin{abstract}
We study evolution of (strong K\"ahler with torsion) SKT structures via the pluriclosed flow on complex  nilmanifolds, i.e.  on compact quotients of simply connected nilpotent Lie groups  by discrete subgroups endowed with an invariant complex structure.
Adapting   to our case the techniques introduced by Jorge Lauret for studying Ricci flow on homogeneous spaces    we show  that   for SKT  Lie algebras  the pluriclosed flow  is equivalent to a bracket flow and   we prove a long time existence result in the nilpotent case. Finally, we introduce a natural flow for evolving tamed
symplectic forms on a complex manifold, by  considering  evolution of symplectic forms via the flow induced by the Bismut Ricci  form.

\end{abstract}
\section{introduction}
Let $M$ be a   Hermitian manifold of complex dimension $n$.   If $M$ is  non-K\"ahler, the Levi-Civita connection is not compatible with the induced ${\rm U}(n)$-structure and its role is often replaced by other connections having torsion but preserving the Hermitian structure \cite{Gauduchon}. Although
there is no a canonical choice of a Hermitian connection, the Chern and the Bismut connections seem to have a central role. The
 Chern connection is defined as the unique Hermitian connection $\nabla^C$ for which  the $(1,1)$-component of the torsion tensor vanishes, while    the Bismut connection
 has skew-symmetric torsion \cite{Bismuth}. Streets and Tian pointed out in \cite{ST2} that the operator $g\mapsto -S(g)$ defined from the space of Hermitian metrics on a complex manifold $(M,J)$ by the formula
$$
S(g)_{i\bar j}=g^{\bar k r} R^{C}_{i\bar j r\bar k}
$$
is strongly elliptic, where $R^{C}=[\nabla^C,\nabla^C]-\nabla^C_{[\,,\,]}$ is the curvature of $\nabla^C$.  Consequently the standard theory of parabolic equations ensures that the Ricci-type flow
\begin{equation}\label{preflow}
\begin{cases}
\frac{d}{dt}g=-S(g)+L(g)\\
g(0)=g_0
\end{cases}
\end{equation}
has a unique maximal solution defined in an interval $[0,T)$, where $L$ is an arbitrary first order differential operator.  Moreover,  if we choose $L(g)$ to be a suitable operator $Q(g)$ depending quadratically on the torsion of $\N^C$,  the flow
\begin{equation}\label{HCF}
\begin{cases}
\frac{d}{dt}g=-S(g)+Q(g)\\
g(0)=g_0
\end{cases}
\end{equation}
is  the gradient flow of a  functional $\mathbb{F}=\mathbb{F}(g)$. The flow \eqref{HCF} is called
the {\em Hermitian curvature flow} and preserves both the K\"ahler and the SKT condition (see \cite{ST}). We recall that a Hermitian structure  $(J, g)$  is called SKT (strong K\"ahler with torsion)  or {\em{pluriclosed}} if its fundamental form  $\omega(\cdot,\cdot)=g(\cdot,J\cdot)$  is $\partial\ov{\partial}$-closed.
In the SKT case, \eqref{HCF}  is equivalent to the  so-called {\em pluriclosed  flow}
\begin{equation}\label{flowST}
\begin{cases}
\frac{d}{dt}\omega=-(\rho^{B})^{1,1}\\
\omega(0)=\omega_0
\end{cases}
\end{equation}
acting on the space of $J$-compatible non-degenerate real $2$-forms, where $(\rho^B)^{1,1}$ denotes the $(1,1)$-part of   the Ricci form $\rho^B$  of the Bismut  connection (i.e.  the  $(1,1)$-part of  the so-called   Bismut Ricci form). In the terminology of \cite{ST} an SKT structure $\omega$ is called {\em static} if it satisfies the Einstein-type equation
\begin{equation}\label{static}
r\,\omega=(\rho^{B})^{1,1}
\end{equation}
where $r\in \R$. Static SKT structures seem to be very rare in complex non-K\"ahler manifolds, since if $\omega$ is static with $r\neq 0$, then $\Omega=\frac{1}{r}\rho^B$ is a symplectic form taming $J$ (i.e. a {\em Hermitian-symplectic structure}  in the terminology of \cite{ST}).   If a complex surface admits a   Hermitian-symplectic structure $\Omega$, then by \cite{Popovici} the Hermitian metric associated to $\Omega^{1,1}$ is {\em strongly Gauduchon}. Indeed, by \cite[Lemma 3.2]{Popovici}  a complex manifold $(M, J)$ of complex dimension $n$  carries a strongly
Gauduchon metric  $g$  if and only if there exists a real $d$-closed ${\mathcal C}^{\infty}$ $(2n-2)$-form
$\Omega$ on $M$ such that its component of type $(n-1, n-1)$
 is positive on $M$.

It is known that every compact complex surface admitting a Hermitian-symplectic structure is actually K\"ahler (see \cite{ST,LZ}) and it is still an open problem to find an example of a compact Hermitian-symplectic manifold  not admitting K\"ahler structures. Some negative results on this context are proved in \cite{EFV} for nilmanifolds and in \cite{EF} for $4$-dimensional Lie algebras. In particular, from the results of \cite{EFV} it turns out that a nilmanifold  endowed with an invariant complex structure  does not admit Hermitian-symplectic structures  unless it is a torus. This last result together  with a theorem of \cite{Enrietti} implies that   complex nilmanifolds    cannot admit  static SKT structures unless they are tori.

\medskip
The present paper is divided in two parts. In the first one we investigate  the behaviour of solutions of \eqref{flowST} on Lie algebras. In particular we prove the following
\begin{theorem}\label{main1}
Let $(M=G/ \Gamma,J,\omega_0)$ be a {\em nilmanifold}  endowed with an invariant SKT structure. Then the solution $\omega(t)$ to the pluriclosed flow \eqref{flowST} is defined for every $t\in (-\epsilon,\infty)$, where $\epsilon$ is a suitable positive real  number.
\end{theorem}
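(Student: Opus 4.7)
The overall strategy is to reduce the pluriclosed flow \eqref{flowST} to a finite-dimensional ODE on the Lie algebra $\mathfrak{g}$ of $G$ and then to apply the bracket flow technique of Lauret. Since $J$ and $\omega_0$ are both left-invariant and the flow \eqref{flowST} is geometrically defined, its solution $\omega(t)$ remains left-invariant as long as it exists. Thus \eqref{flowST} descends to an ODE on the finite-dimensional manifold of invariant $J$-Hermitian $2$-forms on $\mathfrak{g}$, and the standard theory of smooth ODEs immediately provides a short interval of existence $(-\epsilon,\epsilon)$. The real content of the theorem is therefore forward long-time existence.

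To obtain it, one keeps the almost complex structure $J$ and the inner product $\langle\cdot,\cdot\rangle_0=\omega_0(\cdot,J\cdot)$ fixed on $\mathfrak{g}$ and transfers the evolution onto the Lie bracket. More precisely, let $P_t\in\mathrm{End}(\mathfrak{g})$ be the symmetric endomorphism associated with $(\rho^B)^{1,1}(\omega_t)$ via $\omega_t$, and let $h_t\in\mathrm{GL}(\mathfrak{g})$ be the solution of a suitable linear ODE driven by $P_t$ with $h_0=\mathrm{Id}$. Setting $\mu_t:=h_t\cdot\mu_0$, where $\mu_0$ is the initial bracket and the dot denotes the natural $\mathrm{GL}(\mathfrak{g})$-action on $\Lambda^2\mathfrak{g}^*\otimes\mathfrak{g}$, a direct check shows that $h_t^*\omega_0=\omega_t$ and that $\mu_t$ satisfies a bracket flow of the form $\dot\mu=-\pi(P_\mu)\mu$, where $\pi$ is the infinitesimal representation of $\mathrm{GL}(\mathfrak{g})$ on brackets and $P_\mu$ is the Bismut Ricci operator computed from $\mu$ and the fixed data. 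Consequently \eqref{flowST} is defined on $[0,T)$ if and only if the bracket flow is, and one is reduced to showing that $\mu_t$ does not blow up in finite positive time.

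For the nilpotent case this is achieved by exploiting two features. First, the set of nilpotent Lie brackets on $\mathfrak{g}$ is a closed $\mathrm{GL}(\mathfrak{g})$-invariant subvariety, so $\mu_t$ remains nilpotent and in particular every $\mathrm{ad}_X$ has vanishing trace along the flow. Second, precisely because of these trace vanishings, the Bismut Ricci form of an invariant SKT metric on a nilpotent Lie algebra reduces to polynomial combinations of the bracket $\mu$ alone, the Chern--Ricci-type linear contributions being absent; we expect this to yield an estimate of the form $|P_\mu|\le C(1+|\mu|^2)$. Combined with the standard inequality $|\pi(A)\mu|\le c\,|A|\,|\mu|$, this gives a Gronwall-type bound $\tfrac{d}{dt}|\mu_t|^2\le C'(1+|\mu_t|^4)$ that rules out finite-time blow-up. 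The main obstacle I expect is exactly this polynomial bound on $P_\mu$: unlike in Lauret's Ricci flow setting, the Bismut Ricci form mixes curvature and torsion, so one has to write $(\rho^B)^{1,1}$ explicitly in invariant form on $\mathfrak{g}$ and identify the cancellations that nilpotency enforces. Once this estimate is in place, the reduction of the first paragraph and the ODE argument just sketched combine to give Theorem \ref{main1}.
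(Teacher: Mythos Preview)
Your reduction to an ODE on $\mathfrak{g}$ and the passage to the bracket flow are exactly the framework the paper uses. The gap is in the final step. A differential inequality of the form
\[
\frac{d}{dt}\,|\mu_t|^2 \;\le\; C'\bigl(1+|\mu_t|^4\bigr)
\]
does \emph{not} rule out finite-time blow-up: compare with $\dot y = 1+y^2$, whose solutions are $y(t)=\tan(t+c)$. A quadratic bound on $P_\mu$ in terms of $\mu$ is automatic from the polynomial nature of the formulas, but it is far too weak to yield long-time existence; Gronwall only helps when the right-hand side is linear in the quantity being controlled.

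What the paper actually proves is a genuine \emph{monotonicity formula}:
\[
\frac{d}{dt}\,\langle \mu,\mu\rangle \;=\; -\,8\,\langle P_\mu,P_\mu\rangle \;\le\; 0,
\]
so $|\mu_t|$ is non-increasing and cannot blow up. Arriving at this equality is the heart of the argument and requires structural input you have not identified. One first uses that an SKT nilpotent Lie algebra is $2$-step with $J$-invariant center $\xi$, and that $P_\mu$ vanishes along $\xi$. One then establishes, via a detailed comparison of Bismut torsion and Levi-Civita curvature on $2$-step nilpotent Lie algebras, that on $\xi^{\perp}$ the $(1,1)$-part of the Bismut Ricci tensor equals \emph{twice} the $(1,1)$-part of the Riemannian Ricci tensor. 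Combining this with Lauret's identity $\langle \delta_\mu(P_\mu),\mu\rangle = -4\langle P_\mu,\mathrm{Ric}_\mu\rangle$ converts the bracket-flow derivative into $-8|P_\mu|^2$. The ``cancellations that nilpotency enforces'' are therefore not merely a polynomial bound on $P_\mu$, but the precise identity $P_\mu = 2\,\mathrm{Ric}_\mu^{1,1}$ on $\xi^{\perp}$; without it the argument does not close.
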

A key ingredient in the proof of this theorem is a trick introduced by Lauret in \cite{lauret2} for studying the Ricci flow on homogeneus spaces evolving the Lie brackets instead of  the Riemannian metrics.

\medskip
In the second part of the paper we introduce a natural flow for evolving  taming symplectic forms on a complex manifold $(M, J)$.
Given  a symplectic form  $\Omega_0$  taming  $J$,   we consider the flow
\begin{equation}\label{eq:HSflow}
 \begin{cases}
  \frac{d}{d t}\Omega=-\rho^B ( \omega),\\
  \Omega(0)=\Omega_0
 \end{cases}
\end{equation}
where $\rho^B (\omega)$ is the Bismut Ricci form of the Hermitian metric associated to $\omega = \Omega^{1,1}$.  For such a flow we prove a short time existence result and a stability theorem involving K\"ahler-Einstein metrics.

\medskip
\noindent {\em{Acknowledgements}}. The authors would like to thank  Jorge Lauret  for  useful  conversations and the referee for helpful comments on the paper.

\section{Preliminaries on SKT metrics}
Let $(M,g,J)$ be a Hermitian manifold with fundamental form $\omega$. The form $\omega$ and the Riemannian metric $g$ are related 
$$
\omega(\cdot,\cdot)=g(\cdot,J\cdot)\,.
$$
We denote by $\N^B$ the Bismut connection of $(g,J)$. This  connection was introduced by   Bismut in \cite{Bismuth}
and it is the unique Hermitian connection (i.e. $\N^BJ=0$, $\N^B g=0$) such that
\begin{equation} \label{definzc}
c(X,Y,Z):=g(X,T^B(Y,Z))
\end{equation}
is  a $3$-form,  where 
$$
T^{B}(X,Y)=\N^B_{X}Y-\N^B_YX-[X,Y]
$$
denotes  the torsion of $\N^B$.  This connection induces the curvature tensor
$$
R^B(X,Y):=[\N^B_X,\N^B_Y]-\N^B_{[X,Y]},
$$
 the Ricci tensor and the Ricci form given respectively by
$$
ric^B(X,Y)=g^{kr}R^B(e_k,X,Y,e_r)\,,\quad  \rho^B(X,Y)=\frac12g^{kr}g(R^B(X,Y)e_k,Je_r)\,,
$$
with  $\{e_k\}$  an arbitrary  local frame. In complex notation we can alternatively write
$$
ric^B(X,Y)=-ig^{\bar r k}R^B(Z_k,X,Y,Z_{\bar r})\,,\quad\rho^B(X,Y)=-ig^{\bar r k}g(R^B(X,Y)Z_k,Z_{\bar r})\,,
$$
where $\{Z_k=\frac12 (e_k-iJe_k)\}$ is a   local Ê$(1,0)$-frame and $Z_{\bar k}=\frac12 (e_k+iJe_k)$.

For a Hermitian manifold $(M, J)$ the Ricci tensor of $\N^B$ and the usual Ricci tensor are related by the following formula \begin{equation}\label{fundamental}
ric^B (X, Y) = ric^g (X, Y) - \frac 12   (d^* c) (X,Y) - \frac 14 \sum_{i = 1}^{2n} g (T^B (X, e_i), T^B (Y, e_i))\,,
\end{equation}
(see \cite{ivanov2}), where $d^*$ is the co-differential operator of $g$, while $\rho^B$ is related to the Ricci form  $\rho^C$ of the Chern connection by
\begin{equation}\label{fundamental2}
\rho^B=\rho^C-dd^*\omega\,.
\end{equation}

\medskip
We recall the  following
\begin{definition}
A Hermitian metric  $g$ on a complex   manifold $(M, J)$ is called SKT (strong K\"ahler with torsion)  or pluriclosed if   the torsion 3-form $c$ is closed or, equivalently, if  its associated fundamental form
$\omega$ satisfies  $\de \bar{\de}\omega=0$.
\end{definition}

For a complex surface, an  SKT metric  is  called {\em standard}
in the terminology of Gauduchon  \cite{gauduchon2}. In the conformal
class of any given Hermitian metric  on a compact  complex manifold there always exists a standard metric. But  this property is not anymore true in higher dimensions for SKT metrics.

For real Lie algebras admitting left-invariant SKT metrics there are some classification results in dimensions  four, six and  eight.  More precisely, 6-dimensional (resp. 8-dimensional) SKT nilpotent Lie algebras have been classified in \cite{FPS} (resp. in \cite{EFV} and for a particular class in \cite{RT}) and a classification of  $4$-dimensional SKT solvable Lie algebras has been obtained in \cite{MS}.

General results are known  for nilmanifolds, i.e.  for compact quotients of simply connected nilpotent Lie groups $G$ by discrete subgroups  $\Gamma$. Indeed, in [11] it has been shown that if $(M=G/\Gamma,J)$  is a nilmanifold (not a torus) endowed with an invariant complex structure $J$ and  an  SKT metric $g$ compatible with $J$, then  the nilpotent  Lie group $G$ must be $2$-step nilpotent and $M$ is a total space of a principal holomorphic torus bundle over a torus.

\section{Pluriclosed Flow  on Lie algebras}
Let $G$ be a Lie group with a left-invariant  SKT  structure $(g_0,J)$ and let $\Gamma$ be a co-compact lattice in $G$. We are interested in studying solutions to \eqref{flowST}  on  the compact manifold $M=G/\Gamma$ endowed with  the invariant SKT structure induced by $(g_0,J)$. Since  the  pluriclosed flow \eqref{flowST} is invariant by biholomorphisms of  the complex manifold $(M,J)$, when $\omega_0$ is invariant,   the solution $\omega(t)$ to \eqref{flowST} is invariant for every $t$. Therefore the PDE system \eqref{flowST}  on   $M=G/\Gamma$ is equivalent to an ODE system on the Lie algebra $\g$ of $G$. In general neither expect that the flow converges nor that its limit is non-degenerate. 

\medskip
Let $(\mathfrak{g},\mu)$ be a Lie algebra endowed  with an  SKT  structure $(g,J)$,  where $\mu$ denotes the  {Lie}  bracket on $\mathfrak g$. By an  SKT structure  on a Lie algebra we means a pair $(g,J)$, where  $J$ is a complex structure, satisfying the
 integrability condition
$$
\mu(JX,JY)=J\mu(JX,Y)+J\mu(X,JY)+\mu(X,Y),
$$
for every $X,Y\in \g$ and  $g$  is an inner product  compatible with $J$ and such that  $dc =0$, where $c$ is defined by \eqref{definzc}.
In order to write down a formula for the Bismut Ricci form  $\rho^B$ in this  algebraic context, we fix an arbitrary $(1,0)$ frame $\{Z_r\}$  of $(\g,J)$ with dual frame $\{\zeta^k\}$. Following the approach of \cite{Vezzoni} we can write\\
$$
\rho^B=d\eta\,,
$$
where $\eta$ is the real $1$-form

\begin{equation}\label{theta1}
\eta (X)=\Im\mathfrak{m}\left\{g^{\bk r}g(\mu(X -i JX,Z_r),Z_{\bk})\right\}+i g^{\bk r}g(\mu(Z_r,Z_{\bk}),X).
\end{equation}
In complex notation we  have
$$
\eta=\eta_a\,\zeta^a+\eta_{\bar b}\, \zeta^{\bar b},
$$
where
\begin{equation}\label{theta2}
\eta_a=-i g^{\bk r}g(\mu(Z_a,Z_r),Z_{\bk})+ig^{\bk r}g(\mu(Z_r,Z_{\bk}),Z_a)\,,\quad \eta_{\bar b}=\bar{\eta_{b}}\,.
\end{equation}
Formula \eqref{theta2} can be rewritten in terms of the  Lie bracket components $\mu_{ij}^k$ as
$$
\eta_a=-i \mu_{ar}^{r}+ig^{\bk r}\mu_{r\bk}^{\bar l} g_{a\bar l}\,,\quad  \eta_{\bar b}=\bar{\eta_{b}}\,.
$$
Therefore, in complex notation   we obtain
$$
\rho^B=-i\,\rho^B_{i\bar j}\,\zeta^{i}\wedge \zeta^{\bar j}-\frac{i}{2}\,\rho^B_{hk}\,\zeta^{h}\wedge \zeta^{k}-\frac{i}{2}\,\rho^B_{\bar l\bar m}\,\zeta^{\bar l}\wedge \zeta^{\bar m}\,,\quad \rho^B_{i\bar j}=g^{\bar s k}R^B_{i\bar js\bar k},
$$
where
$$
\begin{aligned}
\rho^B_{i\bar j}=&\,-i\eta(\mu(Z_i,Z_{\bar j}))=-i\mu_{i\bar j }^a\eta_a-i\mu_{i\bar j }^{\bar b}\eta_{\bar b}\\
                =&\,- \mu_{i\bar j }^a\mu_{ar}^{r}+ \mu_{i\bar j }^ag^{\bk r}\mu_{r\bk}^{\bar l} g_{a\bar l}
                 +\mu_{i\bar j }^{\bar b}\mu_{\bar b \bar r}^{\bar r}+\mu_{i\bar j }^{\bar b}g^{k \br}\mu_{k \bar r }^{ l} g_{l\bar b}\,,
\end{aligned}
$$
i.e.,
\begin{equation}\label{rho1}
\rho^B_{i\bar j}=\,- \mu_{i\bar j }^a\mu_{ar}^{r}+ \mu_{i\bar j }^ag^{\bk r}\mu_{r\bk}^{\bar l} g_{a\bar l}
                 +\mu_{i\bar j }^{\bar b}\mu_{\bar b \bar r}^{\bar r}+\mu_{i\bar j }^{\bar b}g^{k \br}\mu_{k \bar r }^{ l} g_{l\bar b}\,.
\end{equation}
In the same way
\begin{equation}\label{rho2}
\rho^B_{i j}=\, \,- \mu_{i j }^a\mu_{ar}^{r}+ \mu_{i j }^ag^{\bk r}\mu_{r\bk}^{\bar l} g_{a\bar l}
                 +\mu_{i j }^{\bar b}\mu_{\bar b \bar r}^{\bar r}+\mu_{i j }^{\bar b}g^{k \br}\mu_{k \bar r }^{ l} g_{l\bar b}
\end{equation}
and  \eqref{HCF} writes as
\begin{equation}\label{flowSTalgebra}
\begin{cases}
\frac{d}{dt}g_{i\bar j}=\,\,- \mu_{i\bar j }^a\mu_{ar}^{r}+ \mu_{i\bar j }^ag^{\bk r}\mu_{r\bk}^{\bar l} g_{a\bar l}
                 +\mu_{i\bar j }^{\bar b}\mu_{\bar b \bar r}^{\bar r}+\mu_{i\bar j }^{\bar b}g^{k \br}\mu_{k \bar r }^{ l} g_{l\bar b}\\
                 g_{i\bar j}(0)=(g_0)_{i\bar j}\,.
\end{cases}
\end{equation}

Note that when  $\{Z_r\}$ is a unitary frame we have
\begin{eqnarray}\label{unitary}
&&\rho^B_{i\bar j}=\,\,- \mu_{i\bar j }^a\mu_{ar}^{r}+ \mu_{i\bar j }^a\mu_{r\bar r}^{\bar a}
                 +\mu_{i\bar j }^{\bar b}\mu_{\bar b \bar r}^{\bar r}+\mu_{i\bar j }^{\bar l}\mu_{r \bar r }^{ l}\\
&&\rho^B_{i j}=\,- \mu_{ij }^a \mu_{ar}^{r}+\mu_{ij }^l\mu_{k\bk}^{\bar l}
                 +\mu_{i j }^{\bar b}\mu_{\bar b \bar r}^{\bar r}+\mu_{i j }^{\bar l}\mu_{k \bar k}^{ l} \,,
\end{eqnarray}
where the repeated indexes are summed.

If the Lie algebra $\mathfrak g$ is $2$-step nilpotent, i.e.  if $$
\mu(\mu(X,Y),Z)=0
$$
for every $X,Y,Z\in \g$, then  the formulas \eqref{rho1} and \eqref{rho2} reduce to
\begin{eqnarray}
&&\rho^B_{i\bar j}=\,\mu_{i\bar j }^ag^{r\bk}\mu_{r\bk}^{\bar l} g_{a\bar l}+\mu_{i\bar j }^{\bar b}g^{k \br}\mu_{k \bar r }^{ l} g_{l\bar b}\,.\\
&&\rho^B_{i j}=\,\mu_{ij }^ag^{r\bk}\mu_{r\bk}^{\bar l} g_{a\bar l}+\mu_{i j }^{\bar b}g^{k \br}\mu_{k \bar r }^{ l} g_{l\bar b}\,.
\end{eqnarray}
giving the suitable expression
\begin{equation}\label{rhonilpotent}
\rho^B(X,Y)=-ig^{\bar k r}\, g\left(\mu(X,Y),\mu(Z_r,Z_{\bar k})\right),
\end{equation}
for every $X,Y\in\g$.

\begin{oss}
We observe that the  previous computations hold also in the non-SKT case.
\end{oss}

\medskip

Next we show  two examples of   SKT Lie algebras in dimension $4$ for which the   solution $\omega(t)$  of  the pluriclosed flow \eqref{flowST} is defined for every $t\in (-\epsilon,\infty)$, where $\epsilon$ is a suitable positive  real  number.  The first example is nilpotent and we will show in the next  section that  this happens for every SKT nilpotent Lie algebra. The second one is solvable and  admits  a generalized K\"ahler structure \cite{AG,FT3}.

\begin{example}\label{KT}
In dimension $4$ the unique nilpotent SKT Lie algebra up to isomorphisms
is  $\mathfrak{h}_3 \oplus\R$, where $\mathfrak{h}_3$ is the  Lie algebra of the 3-dimensional real Heisenberg Lie group $H_3 (\R)$  given by
$$
 H_3 (\R) = \left \{  \left (    \begin{array}{ccc} 1&x&z\\ 0&1&y\\ 0&0&1  \end{array}  \right ), \quad x, y, z \in \R  \right \}.
$$
 The  compact quotient of the corresponding simply-connected $H_3 (\R) \times \R$ by the lattice $\Gamma\times \mathbb Z $, where $\Gamma$ is the lattice in $H_3 (\R)$ whose elements  are matrices with integer entries, is the so-called Kodaira-Thurston surface.
The Lie algebra  $\g=\mathfrak h_3\oplus\R$  has structure equations $(0,0,0,12)$, where by this notation we mean that there exists a basis of 1-forms $\{ e^i \}$ such that
$$
d e^i = 0, i = 1,2,3, \quad de^4 = e^1 \wedge e^2.
$$
 Let $J$ be the complex structure on $\g$  given by
$$
Je_1=-e_2,\,\quad  Je_3=-e_4\,.
$$
Then
$$
Z_1=\frac12\, (e_1+ie_2)\,,\quad Z_2=\frac12\,(e_3+ie_4)
$$
is a complex basis of type $(1,0)$ of $(\g,J)$. Let $\{\zeta^1,\zeta^2\}$ be  its dual frame.
Every Hermitian inner product  $g$ on $(\g,J)$ can be written as
$$
g=x\zeta^{1}\zeta^{\bar 1}+y\zeta^{2}\zeta^{\bar 2}+z\zeta^{1}\zeta^{\bar 2}+\bar z \zeta^{2}\zeta^{\bar 1}\,.
$$
where $x,y\in \R$, $z\in \C$ satisfy $xy-|z|^2>0$ and  it is SKT.
Since
$$
\mu(Z_1,Z_{\bar 1})=-\frac12(Z_2-Z_{\bar 2})
$$
is the only non-vanishing bracket we  have
$$
\rho^{B}=-i\rho^{B}_{1\bar 1}\, \zeta^{1\bar 1}
$$
where
$$
\rho^{B}_{1\bar 1}=-i\eta([Z_1,Z_{\bar 1}])=i\frac12 (\eta_2-\eta_{\bar 2})=-\,\Im\mathfrak{m}\,\eta_2\,.
$$
A direct computation yields
$$
\eta_{2}=i \frac{y^2}{2\left(xy-|z|^2\right)}
$$
and
$$
\rho^{B}_{1\bar 1}= -\frac{y^2}{2\left(xy-|z|^2\right)}\,.
$$
Therefore in this case system \eqref{flowSTalgebra} reduces to
\begin{equation}
\label{KT}
\begin{cases}
\dot{x}=\frac{y^2}{2\left(xy-|z|^2\right)}\\
y\equiv y_0\,,\quad z\equiv z_0\\
x(0)=x_0\,,
\end{cases}
\end{equation}
and  the solution to \eqref{flowRn} with
$$
\omega_0=-ix_0\zeta^{1\bar 1}-iy_0\zeta^{2\bar 2}-iz_0\zeta^{1\bar 2}-i\bar z_0\zeta^{2\bar 1}
$$
is
$$
\omega(t)=-ix(t)\zeta^{1\bar 1}-iy_0\zeta^{2\bar 2}-iz_0\zeta^{1\bar 2}- i \bar z_0\zeta^{2\bar 1}
$$
where
$$
x(t)=\frac{1}{y_0}\left(\sqrt{y_0^2 t+(x_0y_0-|z_0|^2)^2}+|z_0|^2\right)\,.
$$
For instance if we start from the standard SKT structure
$$
\omega_0=-i\zeta^{1\bar 1}-i\zeta^{2\bar 2}
$$
we get
$$
\omega(t)=-i\sqrt{1+t}\,\zeta^{1\bar 1}-i\zeta^{2\bar 2}\,.
$$
\end{example}

\begin{example}  Consider the solvable Lie algebra with structure equations
$$
\left \{ \begin{array}{l}
d e^1 = a e^{14} + b e^{24},\\[3 pt]
d e^2 = - b e^{14} + a e^{24},\\[3 pt]
d e^3 = - 2 a e^{34}\\[3 pt]
d e^4 =0,
\end{array}
\right . \quad  a, b \in \R - \{ 0\}, 
$$
endowed with the  complex structure given by
$$
Je_1 = e_2,\quad J e_3 = e_4.
$$
A compact quotient of the corresponding simply-connected Lie group by a  lattice is a   Inoue surface of type $S^0$ (see \cite{Inoue}).
Let $\{ Z_1, Z_2 \}$ be the $(1,0)$-frame
$$
Z_1 = \frac{1}{2} (e_1 - i e_2), \quad Z_2 = \frac{1}{2} (e_3 - i e_4)\,;
$$
then a direct computation yields
$$
\begin{array}{l}
\mu(Z_1,  Z _{\ov 1} ) =0, \quad \mu(Z_1, Z_2) = \lambda\, Z_1, \quad \mu(Z_{\ov 1}, Z_{\ov 2}) =\bar \lambda Z_{\ov 1}, \\[4 pt]
\mu(Z_{ 1}, Z_{\ov 2}) =-\lambda\, Z_{1}, \quad  \mu(Z_{ \ov1}, Z_{2}) = -\bar \lambda Z_{\ov1}, \quad \mu(Z_2, Z_{\ov 2}) = a i (Z_2+Z_{\bar 2})
\end{array}
$$
where
$$
\lambda=\frac{b+ia}{2}\,.
$$
Consider the $(1,0)$-coframe
$$
\zeta^1=e^1+ie^2\,,\quad \zeta^2=e^3+i e^4
$$
dual to $\{Z_1,Z_2\}$. Then
$$
d\zeta^1= - \lambda\,  (\zeta^{12}- \zeta^{1 \ov 2}), \quad d\zeta^2= -  ai \,  \zeta^{2 \ov 2}.
$$
Let $g$ be an arbitrary $J$-Hermitian metric on $\g$. We can write
$$
g=x\,\zeta^{1}\zeta^{\bar1}+ y \zeta^{2}\zeta^{\bar2} + z \zeta^{1}\zeta^{\bar2}+ \bar z \zeta^{2}\zeta^{\bar1}
$$
where $x,y\in \R_+$, $z\in \C$ satisfy
$$
xy-|z|^2>0\,.
$$
The fundamental form of $g$ is
$$
\omega=-ix\,\zeta^{1\bar 1}- iy \zeta^{2\bar 2} - iz \zeta^{1\bar2}- i\bar z \zeta^{2\bar1}\,.
$$
Let $\rho^B=d\eta$ be the Bismut form of $g$. A  direct computation yields
$$
\eta_1=-\frac{xz}{xy-|z|^2}\left( i \bar \lambda+a\right)=-\frac{3a+ib}{2}\,\frac{xz}{xy-|z|^2}
$$
and
$$
\eta_2=i\lambda-\frac{i\bar \lambda|z|^2 +axy}{xy-|z|^2}-\frac{i|z|^2(\lambda+\bar \lambda ) +xy(a-i\lambda)}{xy-|z|^2}\,.
$$

In matrix notation we have
$$
(g_{i\bar j})=\left(\begin{array}{cc}
x &z\\
\bar z & y\\
\end{array}
\right)
$$
and
$$
(g^{\bar ji})=\frac{1}{xy-|z|^2}\left(\begin{array}{cc}
y &- z\\
- \bar z & x\\
\end{array}
\right).
$$
A direct computation yields
\begin{eqnarray}
&& \rho^B_{1\bar 1}=0,\\
&& \rho^B_{1\bar 2}=\lambda\theta_1=\frac{1}{4}(3a^2+b^2-2iab)\,\frac{xz}{xy-|z|^2},\\
&& \rho^B_{2\bar 2}=-\frac{3a^2xy}{xy-|z|^2}
\end{eqnarray}
and the ODEs induced by \eqref{flowSTalgebra} are

\begin{equation}\label{syst}
\begin{cases}
&x={\rm const}\,,\\
& \dot{z}=-\frac{1}{4}(3a^2+b^2-2iab)\,\frac{xz}{xy-|z|^2}\,,\\
& \dot{y}=\frac{3a^2xy}{xy-|z|^2}\,.
\end{cases}
\end{equation}
In particular if we consider as initial SKT structure
$$
\omega_0=-i\zeta^{1\bar 1}-i\zeta^{2\bar 2}\,,
$$
then the system \eqref{syst} has solutions
$$
\begin{cases}
&x\equiv 1\\
&z\equiv 0\\
& y(t)=3a^2 t
\end{cases}
$$
defined for every $t$ and
$$
\omega(t)=-i\zeta^{1\bar 1}-i3a^2 t\zeta^{2\bar 2}\,.
$$
\end{example}

\section{The pluriclosed flow  as bracket flow}
We regard  the pluriclosed  flow \eqref{flowST} on Lie algebras as a bracket flow on $\R^{2n}$ working as in \cite{lauret2}. The idea consists on studying evolution of brackets instead of  the Hermitian metrics.
We briefly describe the clever trick of \cite{lauret2} adapted to our setting:

\medskip
Let  $(\g,\mu_0,J, g_0, \omega_0)$ be a almost Hermitian Lie algebra.  Then  $(\g,\mu_0,J,\omega_0)$ can be thought as $\R^{2n}$ equipped with the standard Hermitian structure $(J_0,\omega_0, \langle \cdot, \cdot\rangle)$ and  a bracket  $\mu_0$. Consider in this setting the system
\begin{equation}\label{flowRn}
\begin{cases}
\frac{d}{dt}\omega=-(\rho^{B})^{1,1}(\omega)\\
\omega(0)=\omega_0
\end{cases}
\end{equation}
where $\rho^B(\omega)$ is computed with respect to $\omega$ and $\mu_0$ using formulae \eqref{unitary}, i.e.,
\begin{equation}\label{rhoBomega}
\rho^B(\omega)(X,Y)=i \sum_{r=1}^{n}\Big(g(\mu_0(\mu_0(X,Y),Z_r),Z_{\br})-g(\mu_0(Z_r,Z_{\br}),\mu_0(X,Y))\Big)\,,
\end{equation}
$g$ is the inner product induced by $(\omega,J_0)$ and $\{Z_r\}$ is a unitary frame.

\medskip
Let
$$
V:=\Lambda^2(\R^{2n})^*\otimes \R^{2n}
$$
be the space of skew-symmetric $2$-forms on $\R^{2n}$ taking values in $\R^{2n}$ and let
\begin{equation}\label{A}
\mathcal{A}:=\{\mu\in  V\,\,:\,\, \mu \mbox{ satisfies the Jacoby identity and }N_{\mu}=0\}
\end{equation}
where $N_{\mu}$ is the Nijenhuis tensor
$$
N_{\mu}(X,Y):=\mu(J_0X,J_0Y)-J_0\mu(J_0X,Y)-J_0\mu(X,J_0Y)-\mu(X,Y)\,.
$$
$\mathcal{A}$ can be regarded as the space of all $2n$-dimensional Lie algebras equipped with a complex structure.
Given a form $\omega\in \Lambda^{2}(\R^{2n})^*$ compatible with $J_0$, there exists a (non-unique) map $h\in {\rm GL}(n,\C)$ such that
\begin{equation}\label{h}
\omega(\cdot,\cdot)=\omega_0(h\cdot,h\cdot)\,.
\end{equation}
For every $h\in {\rm GL}(n,\C)$ satisfying \eqref{h}, we can define
a new bracket $\mu\in \mathcal{A}$ using the natural relation
$$
\mu:=h\cdot \mu_0\,,
$$
where
$$
h\cdot \mu_0(X,Y)=h\,\mu_0\left(h^{-1}X,h^{-1}Y\right)\,.
$$
Since $h$ belongs to ${\rm GL}(n,\C)$, $h\cdot \mu\in \mathcal{A}$.
Moreover, every $\mu\in \mathcal{A}$ induces a $2$-form $\rho^B_{\mu}\in \Lambda^2(\R^{2n})^*$ defined according to \eqref{theta2} as
$$
\rho^B_{\mu}(X,Y)=i \sum_{r=1}^{n}\Big(\langle\mu(\mu(X,Y),Z_r),Z_{\br}\rangle-\langle\mu(Z_r,Z_{\br}),\mu(X,Y)\rangle\Big)\,,
$$
where $\{Z_r\}$ be the standard unitary basis on $(\R^{2n},J_0)$ .
We denote by $P_{\mu}$ the endomorphism corresponding to $(\rho_{\mu}^B)^{1,1}$ and $\mu$ via $\omega_0$, i.e.
\begin{equation}\label{P}
\omega_0(P_{\mu}(X),Y)=(\rho_{\mu}^{B})^{1,1}(X,Y)\,.
\end{equation}
By definition $P_{\mu}$ is $\omega_0$-symmetric and it commutes with $J_{0}$.

In the same way we denote by $P(\omega)$ the endomorphism corresponding $(\rho^{B})^{1,1}(\omega)$ via $\mu_0$, i.e.
$$
\omega(P(\omega)X,Y)=(\rho^{B})^{1,1}(\omega)\,.
$$
 Note that
$$
P_{\mu_0}=P(\omega_0)\,,
$$
since
$$
\rho^B(\omega_0)=\rho^{B}_{\mu_0}\,.
$$
The following lemma describes as the two endomorphisms $P_{\mu}$ and $P(\omega)$ are related and it can be deduced from the equivalence between the Hermitian Lie algebras $(\mu_0, \omega, J_0)$ and 
$(\mu, \omega_0, J_0)$.
\begin{lemma}\label{lemmalauret}
The following formula holds
\begin{equation}
P_{\mu}=h\, P(\omega)h^{-1}\,.
\end{equation}
\end{lemma}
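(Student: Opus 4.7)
The plan is to interpret $h$ as an isomorphism between two Hermitian Lie algebra structures on $\R^{2n}$ and then invoke naturality of the Bismut Ricci form. Concretely, consider on $\R^{2n}$ the two triples $(\mu_{0},\omega,J_{0})$ and $(\mu,\omega_{0},J_{0})$. First, I would verify that $h$ is an isomorphism from the first to the second: since $h\in\mathrm{GL}(n,\C)$ it commutes with $J_{0}$; by \eqref{h} we have $h^{*}\omega_{0}=\omega$; and rewriting $\mu=h\cdot\mu_{0}$ as $\mu(hX,hY)=h\,\mu_{0}(X,Y)$ shows that $h$ intertwines the two brackets. In particular $h$ also intertwines the induced Hermitian inner products.

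Next, I would prove the naturality identity $\rho^{B}(\omega)(X,Y)=\rho^{B}_{\mu}(hX,hY)$. A direct way is to apply \eqref{rhoBomega} using the frame $\{h^{-1}Z_{r}\}$, which is of type $(1,0)$ for $J_{0}$ (because $h$ is $J_{0}$-linear) and unitary with respect to the inner product induced by $\omega$; substituting this frame into \eqref{rhoBomega} and using $h^{*}\omega_{0}=\omega$ together with $\mu(hX,hY)=h\,\mu_{0}(X,Y)$, each term reproduces the corresponding term of $\rho^{B}_{\mu}(hX,hY)$ computed in the frame $\{Z_{r}\}$. Because $h$ commutes with $J_{0}$, the identity survives restriction to the $(1,1)$-part, yielding $(\rho^{B})^{1,1}(\omega)(X,Y)=(\rho^{B}_{\mu})^{1,1}(hX,hY)$.

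Finally, I would convert this identity of $2$-forms into the claimed identity of endomorphisms by applying the defining relation \eqref{P} on both sides. Indeed,
$$
\omega_{0}(hP(\omega)X,hY)=\omega(P(\omega)X,Y)=(\rho^{B})^{1,1}(\omega)(X,Y)=(\rho^{B}_{\mu})^{1,1}(hX,hY)=\omega_{0}(P_{\mu}\,hX,hY),
$$
where the first equality uses $h^{*}\omega_{0}=\omega$, the second is the definition of $P(\omega)$, and the last is the definition of $P_{\mu}$. Since $\omega_{0}$ is non-degenerate and $Y$ is arbitrary, this forces $hP(\omega)=P_{\mu}h$, which is the desired identity.

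The only substantive point is the naturality argument in the middle paragraph; the rest is formal. The challenge there is purely bookkeeping, namely checking that the expression \eqref{rhoBomega}, though written with respect to a fixed unitary frame, depends only on the Hermitian Lie algebra isomorphism class. An alternative, perhaps cleaner route is to observe that the Bismut connection itself transforms naturally under Hermitian isomorphisms, so that the same naturality is automatic for $\rho^{B}$ and hence for its $(1,1)$-part.
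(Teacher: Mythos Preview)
Your proof is correct and follows exactly the approach indicated in the paper: the authors do not give a detailed argument but simply state that the lemma ``can be deduced from the equivalence between the Hermitian Lie algebras $(\mu_0,\omega,J_0)$ and $(\mu,\omega_0,J_0)$,'' which is precisely the isomorphism you set up via $h$ and then exploit through naturality of $\rho^B$. You have just filled in the details the paper omits.
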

%
%

We further consider the bracket flow
\begin{equation}\label{bracketflow}
\begin{cases}
\frac{d}{dt}\mu=\frac12\, \delta_{\mu}(P_{\mu})\\
\mu(0)=\mu_0
\end{cases}
\end{equation}
where $\delta_{\mu}\colon\mathfrak{gl}_n(\C)\to V_{2n}$ is defined by
$$
\delta_{\mu}(\alpha)=\mu(\alpha\cdot,\cdot)+\mu(\cdot,\alpha\cdot)-\alpha\mu(\cdot,\cdot)\,.
$$

\begin{theorem}\label{lauret}
Let $\omega(t)$ be a solution to \eqref{flowRn} and let $\mu(t)$ be a solution to \eqref{bracketflow}. Then there exists a curve $h=h(t)\in {\rm GL}(n,\C)$ such that:
\begin{enumerate}
\item[1.] $\omega=h^*\omega_0$;

\vspace{0.1cm}
\item[2.] $\mu=h\cdot \mu_0$;

\vspace{0.1cm}
\item[3.] $\frac{d}{dt}h=-\frac12hP(\omega)=-\frac12 P_{\mu}h$.
\end{enumerate}
\end{theorem}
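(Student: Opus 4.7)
The strategy is to introduce $h(t)$ by hand via the ODE in $(3)$ and then derive $(1)$ and $(2)$ by uniqueness of the respective flows.

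First, I define $h(t)$ as the unique solution of the linear ODE
\begin{equation*}
\dot h(t)=-\tfrac{1}{2}\,h(t)\,P(\omega(t)),\qquad h(0)=\mathrm{Id},
\end{equation*}
where $\omega(t)$ is the given solution of \eqref{flowRn}. Because both $\omega$ and $(\rho^B)^{1,1}$ are $(1,1)$-forms with respect to $J_0$, the endomorphism $P(\omega(t))$ commutes with $J_0$, so the coefficient of the ODE takes values in $\mathfrak{gl}(n,\C)$; since $h(0)=\mathrm{Id}\in \mathrm{GL}(n,\C)$ and the ODE is linear, $h(t)\in\mathrm{GL}(n,\C)$ for all $t$ in the maximal interval of existence of $\omega(t)$. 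This is the first equality in $(3)$.

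Next, set $\tilde\omega(t):=h(t)^*\omega_0$. Differentiating and substituting $\dot h=-\tfrac{1}{2}hP(\omega)$ gives
\begin{equation*}
\frac{d}{dt}\tilde\omega(X,Y)=-\tfrac{1}{2}\tilde\omega\bigl(P(\omega)X,Y\bigr)-\tfrac{1}{2}\tilde\omega\bigl(X,P(\omega)Y\bigr).
\end{equation*}
On the other hand, from the definition $\omega(P(\omega)X,Y)=(\rho^B)^{1,1}(\omega)(X,Y)$ and the skew-symmetry of both $\omega$ and $(\rho^B)^{1,1}$ one gets $\omega(P(\omega)X,Y)=\omega(X,P(\omega)Y)$, so the pluriclosed flow $\dot\omega=-(\rho^B)^{1,1}(\omega)$ may itself be written as $\dot\omega(X,Y)=-\tfrac{1}{2}\omega(P(\omega)X,Y)-\tfrac{1}{2}\omega(X,P(\omega)Y)$. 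Hence $\omega$ and $\tilde\omega$ satisfy the same linear time-dependent ODE on $\Lambda^2(\R^{2n})^*$ with initial value $\omega_0$, and uniqueness forces $\tilde\omega=\omega$. This proves $(1)$.

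For $(2)$, put $\tilde\mu(t):=h(t)\cdot\mu_0$ and let $A:=\dot h\,h^{-1}=-\tfrac{1}{2}hP(\omega)h^{-1}$. Differentiating the identity $\tilde\mu(X,Y)=h\mu_0(h^{-1}X,h^{-1}Y)$ with the product rule together with $\frac{d}{dt}h^{-1}=-h^{-1}\dot h\,h^{-1}$ yields
\begin{equation*}
\frac{d}{dt}\tilde\mu(X,Y)=A\tilde\mu(X,Y)-\tilde\mu(AX,Y)-\tilde\mu(X,AY)=-\delta_{\tilde\mu}(A)(X,Y).
\end{equation*}
Since $(1)$ is now available, Lemma \ref{lemmalauret} applies and gives $hP(\omega)h^{-1}=P_{\tilde\mu}$, so $A=-\tfrac{1}{2}P_{\tilde\mu}$ and $\dot{\tilde\mu}=\tfrac{1}{2}\delta_{\tilde\mu}(P_{\tilde\mu})$. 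Thus $\tilde\mu$ solves \eqref{bracketflow} with $\tilde\mu(0)=\mu_0$, and uniqueness yields $\tilde\mu=\mu$. The second equality in $(3)$ follows from Lemma \ref{lemmalauret} by writing $-\tfrac{1}{2}hP(\omega)=-\tfrac{1}{2}(hP(\omega)h^{-1})h=-\tfrac{1}{2}P_\mu h$. No step is a serious obstacle; the only nontrivial ingredient beyond routine ODE manipulations is Lemma \ref{lemmalauret}, which is already in place.
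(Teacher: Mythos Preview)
Your proof is correct and follows essentially the same route as the paper: define $h$ by the linear ODE $\dot h=-\tfrac12 hP(\omega)$ with $h(0)=\mathrm{Id}$, show $h^*\omega_0$ and $\omega$ satisfy the same linear ODE to get $(1)$, then show $h\cdot\mu_0$ satisfies the bracket flow via Lemma~\ref{lemmalauret} to get $(2)$ and the second equality in $(3)$. You add a few helpful details the paper leaves implicit (why $h$ stays in $\mathrm{GL}(n,\C)$, the $\omega$-symmetry of $P(\omega)$), but the argument is the same.
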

\begin{proof}
Let $\omega=\omega(t)$ be a solution to \eqref{flowRn} and let
$h(t)$ be the solution to the linear ODE
$$
\begin{cases}
\frac{d}{dt}h=-\frac12h P(\omega)\\
h_0={\rm I}\,.
\end{cases}
$$
If $\tilde\omega=h^*\omega_0$, then
$$
\begin{aligned}
\frac{d}{dt}\tilde \omega(\cdot,\cdot)&=\omega_0(h'\cdot,h\cdot)+\omega_0(h\cdot,h'\cdot)=-\frac12 \omega_0(hP(\omega)\cdot,h\cdot)-\frac12 \omega_0(h\cdot,hP(\omega)
\cdot)\\
&=-\frac12\tilde\omega(P(\omega)\cdot,\cdot)-\frac12\tilde\omega(\cdot,P(\omega)\cdot)
\end{aligned}
$$
Since
$$
\frac{d}{dt}\omega(\cdot,\cdot)=-\frac12\omega(P(\omega)\cdot,\cdot)-\frac12\omega(\cdot,P(\omega)\cdot)
$$
$\omega$ and $\tilde \omega$ solve the same ODE
$$
\begin{cases}
\frac{d}{dt}\beta(\cdot,\cdot)=-\frac12\beta(P(\omega)\cdot,\cdot)-\frac12\beta(\cdot,P(\omega)\cdot)\\
\beta(0)=\omega_0
\end{cases}
$$
and therefore $\omega=h^*\omega_0$. Let $\lambda=h\cdot\mu_0$.  Using Lemma \ref{lemmalauret} and
$h'=-\frac12P_\lambda h$ we obtain
$$
\begin{aligned}
\lambda'(\cdot,\cdot)&=h'\mu_0(h^{-1}\cdot,h^{-1}\cdot)-h\mu_0(h^{-1}h'h^{-1}\cdot,h^{-1}\cdot)-h\mu_0(h^{-1}\cdot,h^{-1}h'h^{-1}\cdot)\\
&= \left(h'h^{-1}\right)h\mu_0(h^{-1}\cdot,h^{-1}\cdot)-h\mu_0(h^{-1}(h'h^{-1})\cdot,h^{-1}\cdot)-h\mu_0(h^{-1}\cdot,h^{-1}(h'h^{-1})\cdot)\\
&=-\delta_{\lambda}(h'h^{-1})=\frac12 \delta_{\lambda}(P_\lambda)\,.
\end{aligned}
$$
Therefore $\lambda$ and $\mu$ solve the same ODE and consequently they coincide, as required.
\end{proof}

\begin{oss} Note that the Bismut scalar form $b(\omega)=g(\rho^B(\omega),\omega)$ reads in terms of bracket as
$$
b_{\mu}:=\langle \rho^B_\mu,\omega\rangle=\sum_{r,k}\langle\mu(Z_r,Z_{\br}),\mu(Z_k,Z_{\bar k})\rangle\,,
$$
i.e.,
$$
b_{\mu}=-\left\|\sum_r \mu(Z_r,Z_{\br}) \right\|^2\,.
$$
$\{Z_r\}$ being an arbitrary unitary frame.
\end{oss}

\begin{lemma}
The bracket flow \eqref{bracketflow} preserves the center of $\mu_0$.
\end{lemma}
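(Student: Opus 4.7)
The plan is to fix an arbitrary $X\in Z(\mu_0)$ and show that $X$ stays central under the flow, i.e.\ $X\in Z(\mu(t))$ for every $t$ in the interval of existence. The strategy is to recognize that the linear subspace
$$
\mathcal{L}_X:=\{\nu\in V:\,\nu(X,Y)=0 \text{ for all } Y\in \R^{2n}\}
$$
is invariant under the bracket flow \eqref{bracketflow}. Since $\mathcal{L}_X$ is linear, the standard ODE criterion for invariance reduces to checking that the right-hand side $\tfrac12\delta_\mu(P_\mu)$ lies in $\mathcal{L}_X$ whenever $\mu$ does. Expanding,
$$
\delta_\mu(P_\mu)(X,Y)=\mu(P_\mu X,Y)+\mu(X,P_\mu Y)-P_\mu\mu(X,Y)=\mu(P_\mu X,Y)
$$
for $\mu\in \mathcal{L}_X$, so the task collapses to proving the following key lemma: \emph{if $X\in Z(\mu)$ then $P_\mu X=0$}.

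To establish the key lemma, I would substitute the hypothesis $\mu(X,Y)=0$ directly into the algebraic formula
$$
\rho^B_{\mu}(X,Y)=i\sum_{r=1}^{n}\Bigl(\langle\mu(\mu(X,Y),Z_r),Z_{\br}\rangle-\langle\mu(Z_r,Z_{\br}),\mu(X,Y)\rangle\Bigr).
$$
Both summands vanish, hence $\rho^B_\mu(X,\cdot)\equiv 0$, and in particular $(\rho^B_\mu)^{1,1}(X,\cdot)=0$. By the defining relation \eqref{P} and nondegeneracy of $\omega_0$ this forces $P_\mu X=0$, as desired. Plugging this back gives $\mu(P_\mu X,Y)=0$, so $\mathcal{L}_X$ is invariant and $Z(\mu_0)\subseteq Z(\mu(t))$ for all $t$.

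To upgrade the inclusion to equality, I would invoke Theorem \ref{lauret}: the map $h(t)\in \mathrm{GL}(n,\C)$ is a Lie algebra isomorphism between $(\R^{2n},\mu_0)$ and $(\R^{2n},\mu(t))$, so in particular $\dim Z(\mu(t))=\dim Z(\mu_0)$, which combined with the established inclusion yields $Z(\mu_0)=Z(\mu(t))$. The only substantive step is the key lemma $P_\mu|_{Z(\mu)}=0$; everything else is formal manipulation of the ODE and of the tensor $\delta_\mu$. The main (minor) pitfall to watch is making sure the $(1,1)$-projection does not destroy the vanishing of $\rho^B_\mu(X,\cdot)$, but this is automatic since the full form already vanishes on $(X,\cdot)$.
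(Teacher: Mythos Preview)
Your argument is sound in structure and takes a genuinely different route from the paper. The paper works on the \emph{form} side: it observes that $\rho^B(\omega)(X^{\xi},\cdot)=0$ for $X^{\xi}$ in the center, deduces that under the form flow \eqref{flowRn} the component of $\omega(t)$ transverse to $\xi_0^{\perp}\times\xi_0^{\perp}$ stays constant, and then uses the equivariance of Theorem~\ref{lauret} to conclude that $h(t)$ fixes $\xi_0$ and hence $Z(\mu(t))=h(t)\,Z(\mu_0)=\xi_0$. You bypass $h$ and $\omega$ entirely and argue directly on the bracket side, showing that each linear subspace $\mathcal{L}_X\subset V$ is invariant under the ODE; this is cleaner and more self-contained. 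Both arguments rest on the same key computation $\rho^B_\mu(X,\cdot)\equiv 0$ for central $X$, and both finish with the dimension count via $\mu(t)=h(t)\cdot\mu_0$.

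One genuine gap: your last sentence is wrong. The $(1,1)$-projection is \emph{not} automatic from $\rho^B_\mu(X,\cdot)\equiv 0$. Since
\[
(\rho^B_\mu)^{1,1}(X,Y)=\tfrac12\bigl(\rho^B_\mu(X,Y)+\rho^B_\mu(J_0X,J_0Y)\bigr)=\tfrac12\,\rho^B_\mu(J_0X,J_0Y),
\]
you need $\rho^B_\mu(J_0X,\cdot)\equiv 0$ as well, i.e.\ $J_0X\in Z(\mu)$. So the key lemma $P_\mu|_{Z(\mu)}=0$ requires the center to be $J_0$-invariant. The paper's proof glosses over exactly the same point (the passage from ``$\rho^B(X^\xi,\cdot)=0$'' to ``$\omega(t)=\alpha_0'+\alpha(t)$'' implicitly uses $(\rho^B)^{1,1}(X^\xi,\cdot)=0$). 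In the intended application --- SKT nilpotent Lie algebras --- the center \emph{is} $J$-invariant by the theorem of \cite{EFV} quoted in Section~5, so both arguments are valid there; but for a general $\mu_0\in\mathcal{A}$ you should either add this hypothesis or supply a separate argument.
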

\begin{proof}
Consider on $(\R^{2n},J_0, \omega_0)$ an arbitrary $\mu_0\in \mathcal{A}$. Let $\xi_{0}$ be the center of $\mu_0$ and $\xi_0^{\perp}$ its orthogonal complement with respect to $\langle\,,\,\rangle$.
Every $J_0$-compatible non-degenerate form $\omega$ can be decomposed as $\omega=\alpha'+\alpha$  with respect the splitting $\g=\xi\oplus \xi^{\perp}$, where $\alpha$ is the restriction of $\omega$ to $\xi^{\perp}\times\xi^{\perp}$. We can write in particular $\omega_0=\alpha_0+\alpha_0'$.
Formula \eqref{rhoBomega} implies that  $\rho^B(X^{\xi},\cdot)$ vanishes for every $X^{\xi}\in \xi$. Therefore the solution $\omega(t)$ to \eqref{flowRn} can be written as $\omega(t)=\alpha'_0+\alpha(t)$ and every $h=h(t)$ satisfying conditions 1,2,3 of Theorem \ref{lauret} preserves $\alpha_0'$, i.e. $h(t)^*(\alpha'_0)=\alpha'_0$ for every $t$. There follows $h(t)(\xi_0)=\xi_0$ for any $t$. The solution $\mu(t)$ to the bracket flow \eqref{bracketflow} is defined in terms of $h$ and $\mu_0$ as $\mu(t)=h(t)\cdot \mu_0$ and for every $t$ the kernel of $\mu(t)$ is $\xi_{t}=h(t)\xi_0$. Hence $\xi_t\equiv \xi_0$, as required.
\end{proof}

We describe  now how the SKT condition reads in terms of brackets:\\
Let $\mu$ be a bracket in $\mathcal A$.  Then $\mu$ induces the differential operator
$$
d_{\mu}\colon \Lambda^{r} (\R^{2n})^*\to \Lambda^{r+1}(\R^{2n})^*
$$
defined in terms of $\mu$ as
$$
d_{\mu}\gamma(X_0,X_1,\dots, X_r)=\sum_{0\leq i\leq j\leq r} (-1)^{i+j}\gamma(\mu(X_i,X_j),X_0,\dots, \hat{X_i},\dots, \hat{X}_{j},\dots, X_r)\,.
$$
Furthermore, the complex extension of $d_{\mu}$ splits with respect to $J_0$ as $d_{\mu}=\partial_{\mu}+\bar{\partial}_{\mu}$. We denote by $\de$, $\bar \de$ the usual differential complex operator on $(\R^{2n},J_0)$.
Every SKT Lie algebra can be seen as  a Hermitian Lie algebra $(\R^{2n},J_0,\langle\,,\rangle,\mu)$ whose fundamental form $\omega_0$ satisfies
$$
\bar \de_{\mu}\de_{\mu}\omega_0=0\,.
$$
This motivates the following
\begin{definition}
A bracket  $\mu\in\mathcal{A}$ is {\em SKT} if
\begin{equation}\label{SKT}
\bar{\partial}_{\mu}\partial_{\mu}\omega_0=0\,.
\end{equation}
\end{definition}
The identity \eqref{SKT} is an algebraic equation in $\mu$ and, therefore, the set of SKT brackets gives an algebraic subset of $\mathcal{A}$.

\section{Long time existence for nilmanifolds }
In this subsection we focus on SKT structures on  nilpotent Lie algebras proving Theorem \ref{main1}.

  The following two results will be important in the sequel

\begin{theorem} $($\cite{EFV}$)$
Let $(\g,\mu,J,\omega)$ be an SKT nilpotent Lie algebra.  Then $\g$ is $2$-step and $J$ preserves the center $\xi$  of $\g$.
\end{theorem}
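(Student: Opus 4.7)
My plan is to derive both assertions from the SKT equation $\partial_\mu\bar\partial_\mu\omega=0$ via a non-negative trace identity on $\g$, leveraging the unimodularity that comes for free from nilpotency: by Engel's theorem every $\operatorname{ad}_X$ is nilpotent, so $\operatorname{tr}\operatorname{ad}_X=0$. This will kill the linear-in-bracket contributions in the pluriclosed identity and leave a manifestly quadratic residual whose vanishing imposes both structural conclusions.

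I fix a unitary $(1,0)$-frame $\{Z_r\}$ with dual coframe $\{\zeta^r\}$ and write
$$
\mu(Z_i,Z_j)=A_{ij}^{k}Z_k,\qquad \mu(Z_i,Z_{\bar j})=B_{i\bar j}^{k}Z_k+C_{i\bar j}^{\bar k}Z_{\bar k},
$$
where integrability of $J$ removes the $(0,1)$-part of $\mu(Z_i,Z_j)$. Using $d_\mu\zeta^k(X,Y)=-\zeta^k(\mu(X,Y))$, I would expand $\omega=-i\sum_k\zeta^k\wedge\zeta^{\bar k}$ and compute $\partial\bar\partial\omega$ explicitly, turning SKT into a quadratic identity in $A,B,C$. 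Then I would contract with $\omega$ (equivalently pair with $\omega^{n-2}$). The unimodularity relations $A_{rk}^{r}=0$ and $B_{r\bar k}^{r}+C_{r\bar k}^{\bar r}=0$ cancel the linear cross terms, and the hoped-for outcome is an identity of the form
$$
0=\|\Psi_1\|^2+\|\Psi_2\|^2,
$$
with $\Psi_1$ measuring the ``deep'' iterated brackets $\mu(\mu(\cdot,\cdot),\cdot)$ and $\Psi_2$ measuring the components of $\mu$ whose image falls outside $\xi\cap J\xi$. Vanishing of $\Psi_1$ is precisely the 2-step condition $[\g,[\g,\g]]=0$; vanishing of $\Psi_2$ then forces $\mathrm{Im}\,\mu\subseteq\xi\cap J\xi$, and a short linear-algebra argument using that $\omega$ is of type $(1,1)$ and that $\xi\supseteq[\g,\g]$ recovers $J\xi=\xi$.

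The hardest step is isolating the correct pairing so that both squares appear with definite sign simultaneously; naive contractions produce cross terms between $A$-type and $B,C$-type constants without a fixed sign. My backup plan is induction on the nilpotency step $k$: assume the theorem for step $k-1$, descend the SKT condition to $\g/\xi_{k-2}$ with its induced Hermitian structure, and apply formula \eqref{rhonilpotent} on the $2$-step quotient to isolate a positive quantity that cannot be balanced by the top-layer brackets once $k\geq 3$. With 2-step in hand, the $J$-invariance of $\xi$ would follow by applying the same trace to the now-simpler identity \eqref{rhonilpotent} and observing that $\rho^B(JZ,\cdot)=0$ for $Z\in\xi$ is, under positivity of $g$, equivalent to $JZ$ lying in the center.
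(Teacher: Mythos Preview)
The paper does not prove this theorem; it is quoted from \cite{EFV} as a known input and used without argument in the long-time existence section. There is therefore no proof in the present paper to compare your attempt against.

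Regarding your proposal itself: the single sum-of-squares identity you hope for does not fall out of one contraction of $\partial_\mu\bar\partial_\mu\omega=0$, and you already flag the cross-term obstruction. The argument in \cite{EFV} is not a one-shot trace; it uses the extra structure available on a nilpotent $\g$ with integrable $J$ (an ascending $J$-compatible filtration of $\g^{1,0}$) and reads off the $2$-step condition from the coefficients of $\partial\bar\partial\omega$ layer by layer. Your backup induction also has a gap: to descend the Hermitian structure to $\g/\mathfrak a$ you need $\mathfrak a$ to be $J$-invariant, which is part of the conclusion, and even granting that, there is no reason the quotient metric remains SKT---the condition $\partial\bar\partial\omega=0$ involves the full bracket, not just its image in the quotient.

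Your last step is incorrect as stated. From \eqref{rhonilpotent}, the vanishing of $\rho^B(JZ,Y)$ for all $Y$ only says that $\mu(JZ,Y)$ is orthogonal to the span of $\{\mu(Z_r,Z_{\bar k})\}_{r,k}$, which is in general a proper subspace of $[\g,\g]$; it does not force $\mu(JZ,\cdot)=0$, so it does not yield $JZ\in\xi$.
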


\begin{theorem} $($\cite{Vezzoni}$)$
For a $2$-step nilpotent almost Hermitian Lie algebra, the Chern form $\rho^C$ is always vanishing.
\end{theorem}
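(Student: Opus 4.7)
The plan is to derive an algebraic expression $\rho^C = d\chi^C$ for the Chern Ricci form of an almost Hermitian Lie algebra, in analogy with formulas \eqref{theta1}--\eqref{theta2} for $\rho^B$, and then to verify that the $1$-form $\chi^C$ automatically vanishes on the center $\xi$ of a $2$-step nilpotent Lie algebra. Since $d$ acts on $\g^*$ as the Chevalley--Eilenberg differential, this gives $\rho^C(X,Y)=-\chi^C(\mu(X,Y))$; combined with $\mu(X,Y)\in\xi$ in the $2$-step case this forces $\rho^C\equiv 0$.

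Concretely, I would first solve for the Chern connection $\N^C$ on left-invariant vector fields from its defining properties $\N^C g=0$, $\N^C J=0$, and $(T^C)^{1,1}=0$, obtaining the connection symbols as linear combinations of the structure constants $\mu^{\bullet}_{\bullet\bullet}$ in a unitary $(1,0)$-frame $\{Z_r\}$. Tracing the Chern curvature $R^C$ then yields
\begin{equation*}
\rho^C = d\chi^C, \qquad \chi^C_a = -i\,\mu^r_{ar} + (\text{Nijenhuis and torsion corrections, linear in }\mu),
\end{equation*}
where the principal term is, up to a factor of $i$, the trace on $\g^{1,0}$ of the $(1,0)$-projection of $\operatorname{ad}(Z_a)$. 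The second step is to show $\chi^C|_\xi=0$: using centrality $\mu(W,\cdot)=0$ for $W\in\xi$ together with the reality of $\mu$, the traces defining $\chi^C(W)$ collapse to zero. This is most transparent when $\xi$ is $J$-invariant, since then one can choose $\{Z_r\}$ with the first few vectors spanning $\xi^{1,0}$, and $\chi^C_a$ vanishes term by term for $Z_a\in\xi^{1,0}$.

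The main technical obstacle is Step 1 in the non-integrable setting, where the Nijenhuis-tensor corrections must be tracked carefully. A conceptually cleaner proof in the integrable case is available: for a $2$-step nilpotent complex Lie algebra with $J$-invariant center (which holds, for instance, in the SKT setting by the preceding theorem), the left-invariant $(n,0)$-form $\zeta^1\w\cdots\w\zeta^n$ is $\delb$-closed---the only possibly nonzero contribution $\sum_i\mu^i_{i\bar k}$ to $\delb(\zeta^1\w\cdots\w\zeta^n)$ vanishes, because for $Z_i\in\xi^{1,0}$ the bracket $\mu(Z_i,\cdot)$ is zero by centrality, while for $Z_i$ outside $\xi$ the image $\mu(Z_i,Z_{\bar k})\in\xi^\C$ has no $Z_i$-component. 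Hence $\zeta^1\w\cdots\w\zeta^n$ is a nowhere-vanishing holomorphic trivialization of the canonical bundle, and $\rho^C=-i\,\del\delb\log\|\zeta^1\w\cdots\w\zeta^n\|^2_g=0$ because the norm is a positive constant.
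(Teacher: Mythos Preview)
The paper does not prove this statement; it is quoted from \cite{Vezzoni} and used as a black box (together with formula~\eqref{fundamental2}, it reduces $\rho^B$ to $-dd^*\omega$ in the $2$-step case). So there is no in-paper argument to compare your proposal against; for the original proof one must consult \cite{Vezzoni}.

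On its own merits your plan is sound. The reduction $\rho^C=d\chi^C$ holds quite generally: since $\nabla^C J=0$, the endomorphisms $A_X:=\nabla^C_X$ preserve $\g^{1,0}$, the trace of the commutator $[A_X,A_Y]$ on $\g^{1,0}$ vanishes, and one is left with $\rho^C(X,Y)=-\chi^C(\mu(X,Y))$. One caution: the sentence ``using centrality $\mu(W,\cdot)=0$ \ldots\ the traces defining $\chi^C(W)$ collapse to zero'' is too quick as stated, because the analogous Bismut $1$-form $\eta$ of \eqref{theta2} does \emph{not} vanish on $\xi$ (otherwise $\rho^B$ would vanish as well, contradicting \eqref{rhonilpotent}). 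What singles out $\chi^C$ is the Chern condition $(T^C)^{1,1}=0$: it forces $\nabla^C_{Z_{\bar s}}Z_r=\mu(Z_{\bar s},Z_r)^{1,0}$, so that in a unitary frame $\chi^C$ is controlled entirely by the traces $\sum_r\mu^r_{\bar s r}$. In a frame adapted to a $J$-invariant splitting $\g=\xi\oplus\xi^{\perp}$ these traces vanish identically by exactly the mechanism you describe---brackets land in $\xi^{\C}$ and therefore have no diagonal component along $Z_r\in(\xi^{\perp})^{1,0}$, while brackets with a factor in $\xi$ are zero. Thus in fact $\chi^C\equiv0$ on all of $\g$, not merely on $\xi$. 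Your holomorphic-trivialisation argument is the same computation repackaged, and is correct.

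The only gap is the one you already flag: the theorem is asserted for \emph{almost} Hermitian structures with no hypothesis that $J\xi\subseteq\xi$, and neither of your two routes covers that case as written. For the application made in this paper only the SKT nilpotent case is needed, where both integrability and $J$-invariance of $\xi$ are guaranteed by the preceding theorem; so your argument is adequate here. For the full almost Hermitian statement one must carry out the trace computation without an adapted frame, as in \cite{Vezzoni}.
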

Therefore in the SKT nilpotent case we have to handle  $2$-step nilpotent Lie algebras.  Using the general formula \eqref{fundamental2}
$$
\rho^B=\rho^C-dd^*\omega
$$
we have that the pluriclosed flow  reduces in this case  to
\begin{equation}\label{2step}
\begin{cases}
\frac{d}{dt}\omega=(dd^*\omega)^{1,1}\,,\\
\omega(0)=\omega_0\,.
\end{cases}
\end{equation}

Let us consider then  an SKT ($2$-step) nilpotent Lie algebra $(\g,\mu,J,\omega)$ with induced metric $g$.
We denote by $\sharp\colon \g\to \g^*$  the duality induced by the inner product  $g$. Given a vector subspace $W$ of $\g$ we set $W^{\sharp}:=\sharp(W)$ and we denote by $W^{\perp}$ the orthogonal complement of $W$ with respect to $g$. Finally we denote by $\theta = - J d^* \omega$ the Lee form of $(J,\omega)$.
We have the following

\begin{proposition} The Lee form $\theta$  of a  nilpotent  SKT Lie algebra $({\mathfrak g}, \mu, J, g)$ belongs to  $(J \frak g^1)^{\sharp} \subseteq \xi^{\sharp}$.
\end{proposition}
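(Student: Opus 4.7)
The plan is to reduce the claim to a purely algebraic computation of $d^{*}\omega$ on the Lie algebra and then feed in the structural information about $(\g,\mu,J)$ provided by the two theorems quoted immediately before the statement. The key point is that $(d^{*}\omega)^{\sharp}$ will automatically land in the commutator ideal $\g^{1}=[\g,\g]$; once we know $\g^{1}\subseteq\xi$ and $J\xi=\xi$, the conclusion $\theta\in(J\g^{1})^{\sharp}\subseteq\xi^{\sharp}$ is then essentially immediate.

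First, I would fix a $g$-orthonormal basis $\{e_i\}$ of $\g$ and compute $d^{*}\omega$ by viewing it as the formal adjoint of the Chevalley--Eilenberg differential $d$. Since $\g$ is nilpotent, hence unimodular, this algebraic adjoint coincides with the geometric codifferential on the associated simply connected Lie group. Applying $\langle d^{*}\omega,\alpha\rangle=\langle\omega,d\alpha\rangle$ to $\alpha=X^{\sharp}$ and using the Chevalley--Eilenberg identity $dX^{\sharp}(Y,Z)=-g(\mu(Y,Z),X)$, I would extract the explicit formula
$$
(d^{*}\omega)^{\sharp}=-\sum_{i<j}\omega(e_i,e_j)\,\mu(e_i,e_j).
$$
This exhibits $(d^{*}\omega)^{\sharp}$ as a linear combination of brackets, so $(d^{*}\omega)^{\sharp}\in\g^{1}$.

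To finish, I would invoke the two results just stated: by the theorem of \cite{EFV}, $\g$ is $2$-step nilpotent and $J$ preserves the center $\xi$, so $\g^{1}\subseteq\xi$ and consequently $J\g^{1}\subseteq J\xi=\xi$. Since the Lee form satisfies $\theta^{\sharp}=J(d^{*}\omega)^{\sharp}$ (up to a sign depending on the convention chosen for the action of $J$ on $\g^{*}$), one concludes $\theta^{\sharp}\in J\g^{1}\subseteq\xi$, equivalently $\theta\in(J\g^{1})^{\sharp}\subseteq\xi^{\sharp}$, as desired.

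The only genuinely delicate step is the first one: one has to pin down the sign and normalization in the formula for $(d^{*}\omega)^{\sharp}$ consistently with the paper's conventions, and justify the replacement of the geometric $d^{*}$ by the algebraic adjoint of $d$ on $\Lambda^{\bullet}\g^{*}$ via unimodularity of $\g$. Once this bookkeeping is settled, everything else in the argument is a one-line consequence of the previously recalled structural theorems.
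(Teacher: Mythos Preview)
Your proof is correct and follows essentially the same route as the paper's: both show that the $g$-dual of $d^{*}\omega$ lies in $\g^{1}$ via the adjoint identity $\langle d^{*}\omega,\alpha\rangle=\langle\omega,d\alpha\rangle$, and then invoke $\g^{1}\subseteq\xi$ and $J\xi=\xi$. The only difference is presentational: where you expand in an orthonormal basis to obtain the explicit formula $(d^{*}\omega)^{\sharp}=-\sum_{i<j}\omega(e_i,e_j)\mu(e_i,e_j)$ and appeal to unimodularity, the paper simply tests $g(J\theta,\alpha)=g(\omega,d\alpha)$ against $\alpha\in((\g^{1})^{\perp})^{\sharp}$, notes that such $\alpha$ are closed, and concludes $J\theta\in(\g^{1})^{\sharp}$ directly---this sidesteps your sign and normalization bookkeeping, but the content is identical.
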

\begin{proof}
By definition of  $\theta$ we have
$$
g (J \theta, \alpha) = g (\omega, d \alpha),
$$
for every $\alpha \in \frak g^*$. Note that if $\alpha \in ((\frak g^1)^{\perp})^\sharp$, then $d \alpha =0$. Therefore, $J \theta \in (\frak g^1)^\sharp$, i.e. $\theta \in (J \frak g^1)^\sharp$.
On the other hand,  since $\frak g$ is $2$-step nilpotent and  the center $\xi$ is $J$-invariant,  it follows  that $J \frak g^1 \subseteq \xi$, i.e. that  $\theta \in \xi^\sharp$.
\end{proof}

\begin{lemma}\label{anna1}
For a nilpotent SKT Lie algebra $(\mathfrak g, \mu, J, g)$ the $(1,1)$-part   $(ric^B)^{1,1}$ of the Ricci tensor of the Bismut connection  is symmetric and it is related to $\rho^B$ by
$$
(\rho^B)^{1,1}(X,Y)=(ric^B)^{1,1}(X,JY),
$$
for every $X,Y\in \g$.
\end{lemma}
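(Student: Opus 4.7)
The plan is to combine the two structural results just invoked---the EFV theorem forcing $\g$ to be $2$-step nilpotent with $J\xi=\xi$, and Vezzoni's theorem giving $\rho^C=0$---with the fundamental identities \eqref{fundamental} and \eqref{fundamental2}, so as to reduce both $(\rho^B)^{1,1}$ and $(ric^B)^{1,1}$ to explicit expressions in the bracket $\mu$.

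First, from $\rho^C=0$ and \eqref{fundamental2} one gets $\rho^B=-dd^*\omega$. Since by the preceding proposition $d^*\omega$ is dual to a vector lying in $J\g^1\subseteq\xi$, and $\g$ is $2$-step nilpotent (so $[X,Y]\in\xi$), the identity $(dd^*\omega)(X,Y)=-d^*\omega([X,Y])$ for left-invariant forms turns $\rho^B(X,Y)$ into an explicit bracket expression. Projecting onto $(1,1)$ and using the integrability of $J$ in the form $[JX,JY]=[X,Y]+J[JX,Y]+J[X,JY]$ yields a closed formula for $(\rho^B)^{1,1}(X,Y)$ in terms of $\mu$ and the dual vector of $d^*\omega$.

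Next, I would compute $(ric^B)^{1,1}(X,JY)$ via formula \eqref{fundamental}. Since $J$ preserves both $\xi$ and $V:=\xi^\perp$ (as $g$ is $J$-compatible), the torsion $3$-form $c$ is supported on $V\wedge V\wedge\xi$ only, and each of the three contributions---$ric^g$, the codifferential $d^*c$, and the torsion-squared sum---admits a closed expression in $\mu$ on a $2$-step nilpotent Lie algebra. Taking the $(1,1)$-part and evaluating at $(X,JY)$ produces a second quadratic expression in $\mu$, which must be matched with the one obtained for $(\rho^B)^{1,1}(X,Y)$. The symmetry of $(ric^B)^{1,1}$ then follows formally: since $(\rho^B)^{1,1}$ is a $(1,1)$-form (antisymmetric and $J$-invariant) and $(ric^B)^{1,1}$ is $J$-invariant by definition, writing a $J$-invariant tensor $S$ via $S(X,JY)=\alpha(X,Y)$ with $\alpha$ a $(1,1)$-form forces $S$ to be symmetric.

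The main obstacle will be the matching of the two bracket expressions obtained above. Each involves traces of $\mu$ against the inner product $g$ and the complex structure $J$, and their equality requires a careful algebraic manipulation combining the $J$-equivariance of $c$ (coming from the SKT condition $c$ being of type $(2,1)+(1,2)$), the integrability of $J$, and the $V\oplus\xi$ splitting. In particular, handling the $d^*c$ contribution and the torsion-squared sum under the composition of the $(1,1)$-projection with the rotation $Y\mapsto JY$ is the most delicate step.
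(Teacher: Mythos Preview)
Your strategy is viable in principle but takes a longer and more laborious route than the paper's, and the ``matching'' step you flag as the main obstacle is precisely what the paper circumvents. You propose to compute $(\rho^B)^{1,1}(X,Y)$ and $(ric^B)^{1,1}(X,JY)$ independently---the first via $\rho^B=-dd^*\omega$, the second via formula \eqref{fundamental}---and then compare two quadratic bracket expressions. The paper never attempts this comparison. Instead it invokes the general Hermitian identity
\[
\rho^B(X,Y)=ric^B(X,JY)+(\nabla^B_X\theta)(JY),
\]
which already links $\rho^B$ and $ric^B$ directly through the Lee form $\theta$. The whole proof then reduces to showing that the single correction term $(\nabla^B_X\theta)(JY)$ equals $-(\rho^B)^{2,0+0,2}(X,Y)$. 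This is done by an explicit evaluation of $\nabla^B$ on the splitting $\xi\oplus\xi^\perp$ (using \cite[Lemma~2.1]{Enrietti}): since $\theta\in\xi^\sharp$ one gets $\theta(\nabla^B_X Y)=\tfrac12\theta([X^\perp,Y^\perp]-[JX^\perp,JY^\perp])$, and integrability together with $\rho^B=d(J\theta)$ turns this into $-\tfrac12\bigl(\rho^B(JX,Y)+\rho^B(X,JY)\bigr)$.

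So the paper never touches $ric^g$, $d^*c$, or the torsion-squared sum in proving this lemma; those ingredients appear only in the \emph{next} result (Theorem \ref{anna2}), where something close to your proposed computation via \eqref{fundamental} is in fact carried out---but with Lemma \ref{anna1} already in hand. In effect, your plan fuses the proofs of Lemma \ref{anna1} and Theorem \ref{anna2} into a single large bracket computation, whereas the paper separates them by first isolating the $\rho^B$--$ric^B$ relation through the Lee-form correction. Your route would work if completed, but the paper's is shorter and conceptually cleaner, and avoids exactly the delicate matching you were worried about.
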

\begin{proof}
We can write
$$
\g=\frak \xi \oplus \xi^{\perp},
$$
where $\xi$ is the center of $\g$. The $2$-step condition implies
$$
[\xi^{\perp} , \xi^{\perp}] \subseteq \xi\,.
$$
Every $X \in \frak g$ can be written accordingly as
$$
X = X^{\xi} + X^{\perp},
$$
where $X^{\xi} \in \xi$ and $X^{\perp}  \in \xi^{\perp}$. By \cite[Lemma 2.1]{Enrietti}  we have that
$$
\nabla^B_{X^{\xi}} Y^{\xi}  =0, \quad \nabla^B_{X^{\xi}} Y^{\perp}  \in \xi^{\perp}, \quad \nabla^B_{X^{\perp}} Y^{\xi}  \in \xi^{\perp}$$
and
$$
\nabla^B_{X^{\perp}} Y^{\perp}  = \frac{1}{2} ([X^{\perp}, Y^{\perp}] - [JX^{\perp}, J Y^{\perp}] ) \in \xi.
$$
Therefore
$$
\theta (\nabla^B_X Y) = \frac 12 \theta  ([X^{\perp}, Y^{\perp}] - [JX^{\perp}, J Y^{\perp}] ).
$$
Using the integrability of $J$ we get
$$
\begin{aligned}
\theta (\nabla^B_X Y)=& \frac 12 \theta  (-J[J X^{\perp}, Y^{\perp}] - J [X^{\perp}, JY^{\perp}] )\\
=& \frac 12 (J \theta) ([J X^{\perp}, Y^{\perp}] +  [X^{\perp}, J Y^{\perp}]) \\
=& - \frac 12 d (J \theta) (J X^{\perp}, Y^{\perp}) - \frac 12 d (J \theta) (
X^{\perp}, J Y^{\perp}).
\end{aligned}
$$
Taking into account that  $\rho^B = \rho^C + d (J \theta)$ and that $\rho^C$ in our cases vanishes, we have
$$
\theta (\nabla^B_X Y) = - \frac 12  \rho^B  (J X^{\perp}, Y^{\perp}) - \frac 12 \rho^B  (X^{\perp}, JY^{\perp})\,.
$$
Therefore
$$
(\nabla^B_X \theta) (J Y)=-\theta (\nabla^B_X J Y)= \frac 12  \rho^B  (J X^{\perp}, JY^{\perp}) - \frac 12 \rho^B  (X^{\perp}, Y^{\perp})=-(\rho^B)^{2,0+0,2}(X^{\perp},Y^{\perp}).
$$
Using
$$
\rho^B (X, Y) = ric^B (X, JY) + (\nabla^B_X \theta) (J Y) = ric^B (X, JY) -(\rho^B)^{2,0+0,2}(X^{\perp},Y^{\perp}),
$$
we get
$$
(\rho^B)^{1,1}(X,Y)=\frac{1}{2} [ric^B (X, JY) - ric^B (JX, Y)] = (ric^B)^{1,1} (X, JY)\,,
$$
as required.
\end{proof}
\begin{theorem}\label{anna2}
For a  nilpotent  SKT Lie algebra $({\mathfrak g},  \mu, J, g)$ we have
$$
(ric^B)^{1,1}  (X, Y) = 2 (ric^g)^{1,1}  (X^{\perp}, Y^{\perp})\,,
$$
for every $X,Y\in \g$.
\end{theorem}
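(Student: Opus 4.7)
The plan is to apply formula \eqref{fundamental}, which expresses $ric^B-ric^g$ in terms of $d^*c$ and a quadratic torsion term, and to evaluate each piece using the explicit description of the Bismut connection on a $2$-step nilpotent Hermitian Lie algebra provided by Lemma~2.1 of \cite{Enrietti}, together with the Koszul formula for the Levi-Civita connection.

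First I would dispose of the case where $X$ or $Y$ lies in the center $\xi$. Formula \eqref{rhonilpotent} gives $\rho^B(X,\cdot)=0$ when $X\in\xi$, since $\mu(X,\cdot)=0$; Lemma~\ref{anna1} then forces $(ric^B)^{1,1}(X,\cdot)=0$, and the symmetry of $(ric^B)^{1,1}$ established there also settles the case $Y\in\xi$. Since the right-hand side of the theorem vanishes as soon as $X^\perp=0$ or $Y^\perp=0$, we may assume $X,Y\in\xi^\perp$.

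For such $X,Y$ I observe that in \eqref{fundamental} the term $d^*c$ is a $2$-form (hence antisymmetric), while $ric^g$ and $S(X,Y):=\sum_i g(T^B(X,e_i),T^B(Y,e_i))$ are symmetric. Since $(ric^B)^{1,1}$ is symmetric by Lemma~\ref{anna1}, the codifferential piece cannot contribute and we are reduced to
$$
(ric^B)^{1,1}(X,Y)=(ric^g)^{1,1}(X,Y)-\frac14\, S^{1,1}(X,Y).
$$
The Bismut torsion is now computed via Enrietti's formulas: for $U,V\in\xi^\perp$ one finds $T^B(U,V)=-[JU,JV]\in\xi$, while for $U\in\xi^\perp$, $Z\in\xi$ the vector $T^B(U,Z)\in\xi^\perp$ is characterised by $g(T^B(U,Z),W)=-g([JW,JU],Z)$ for every $W\in\xi^\perp$. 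Splitting the trace defining $S(X,Y)$ along an orthonormal basis adapted to $\xi^\perp\oplus\xi$ and introducing the $j$-maps $j_Z\colon\xi^\perp\to\xi^\perp$ given by $g(j_Z A,B)=g([A,B],Z)$, both halves collapse (after the substitution $e_a\mapsto Je_a$ in the $\xi^\perp$-summation) to the same expression, yielding $S(X,Y)=2\sum_k g(j_{f_k}(JX),j_{f_k}(JY))$ with $\{f_k\}$ an orthonormal basis of $\xi$.

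On the other hand, a direct computation from the Koszul formula using the identities $\nabla_U V=\frac12[U,V]$ for $U,V\in\xi^\perp$, $\nabla_U Z=-\frac12 j_Z U$ for $U\in\xi^\perp$, $Z\in\xi$, and their counterparts, gives $ric^g(X,Y)=-\frac12\sum_k g(j_{f_k}X,j_{f_k}Y)$ for $X,Y\in\xi^\perp$. Taking $(1,1)$-parts then produces $S^{1,1}=-4(ric^g)^{1,1}$, and plugging back yields $(ric^B)^{1,1}(X,Y)=2(ric^g)^{1,1}(X,Y)$, as claimed. The main obstacle is the identification of these two traces: the torsion-squared term and the Riemannian Ricci tensor take rather different shapes at first sight, and it is only through the $j$-formalism and the careful bookkeeping of the four components of $T^B$ along $\g=\xi\oplus\xi^\perp$ that the two summands of $S$ (coming respectively from $e_i\in\xi^\perp$ and from $e_i\in\xi$) turn out to contribute equally and combine to exactly $-4(ric^g)^{1,1}$.
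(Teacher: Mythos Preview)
Your proof is correct and follows essentially the same route as the paper: reduce to $X,Y\in\xi^\perp$ via \eqref{rhonilpotent} and Lemma~\ref{anna1}, drop the $d^*c$ term by the symmetry argument, and evaluate the torsion-squared trace using the explicit Bismut torsion on a $2$-step nilpotent Lie algebra together with Eberlein's $\iota$-maps (your $j$-maps). The only cosmetic difference is that the paper computes the $\xi^\perp$-half of the trace through Eberlein's sectional-curvature identities for $R^g$, whereas you expand $[JX,Je_a]$ directly along the orthonormal basis $\{f_k\}$ of $\xi$; the two computations are equivalent and lead to the same identity $S^{1,1}=-4(ric^g)^{1,1}$ on $\xi^\perp$.
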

\begin{proof}
Using formula \eqref{rhonilpotent} and Lemma \ref{anna1} we have that
$$
(ric^B)^{1,1}  (X, Y)=(ric^B)^{1,1}  (X^{\perp}, Y^{\perp})\,.
$$
In view of formula \eqref{fundamental} 
Lemma \ref{anna1} implies
$$
(ric^B)^{1,1}  (X, Y)=(ric^g)^{1,1} (X, Y) -  \frac 18 \sum_{i = 1}^{2n} [g (T^B (X, e_i), T^B(Y, e_i) +  g (T^B(JX, e_i), T^B (JY, e_i)]\,. $$
Hence the claim consists on showing that
$$
\frac18\,\sum_{i = 1}^{2n} [g (T^B (X^{\perp}, e_i), T^B (Y^{\perp}, e_i) +  g (T^B (JX^{\perp}, e_i), T^B (JY^{\perp}, e_i)]=-2(ric^g)^{1,1} (X^{\perp}, Y^{\perp})\,.
$$
Let $S\colon \xi^{\perp}\to \xi^{\perp}$ be the symmetric operator defined by the relation
$$
g(S(X^{\perp}),Y^{\perp})=ric^g(X^{\perp},Y^{\perp})\,.
$$
By \cite{Eberlein}  we have that $S$ can be written as
$S = \frac 12  \sum_{i = 1}^{2p} \iota (z_i)^2,$
where $\{z_1, \ldots, z_{2p}\}$ is an orthonormal basis of $\xi$ and  the skew-symmetric map $\iota(Z):
\xi^{\perp} \to \xi^{\perp}$ is defined by
$$
g(\iota (Z) X, Y) = g( [X, Y], Z), \quad X, Y \in \xi^{\perp},
$$
for $Z \in \xi.$ Equivalently $\iota (Z) (X) = - (ad_X)^* (Z)$, for every $X \in \xi^{\perp},$ where $(ad_X)^*$ is the adjoint of $ad_X$ with respect to the inner product $g$. In particular $S$ is negative definite on $\xi^{\perp}$ and
$$
ric^g (X^{\perp}, Y^{\perp}) = \frac{1}{2} \sum_{i = 1}^{2p}  g( \iota (z_i)^2 (X^{\perp}), Y^{\perp}) = - \frac{1}{2} \sum_{i = 1}^{2p} g (\iota (z_i) X^{\perp}, \iota (z_i) Y^{\perp}),
$$
where $(z_1, \ldots,  z_{2p})$ is an  orthonormal basis of $\xi$.
By a direct computation we have that, for every  $z_i \in \xi$,
$$
g (T^B(X^{\perp},  z_i), T^B  (Y^{\perp}, z_i)) = g(\iota (z_i) (J Y^{\perp}), \iota(z_i) (J X^{\perp}))=- 2 ric^g (J X^{\perp}, J Y^{\perp}) .
$$
On the other hand, if $v_i \in \xi^{\perp}$, then
$$
g (T^B(X^{\perp},  v_i), T^B  (Y^{\perp}, v_i)) = g ([J v_i, J X^{\perp}], [J v_i, J Y^{\perp}]).
$$
By  Section 2 in \cite{Eberlein} for a  metric $2$-step nilpotent Lie algebra one has
$$
g(R^g (X^{\perp}, Y^{\perp}) X^{\perp}, W) = \frac 34 g (\iota ([X^{\perp}, Y^{\perp}] )X^{\perp}, W) = \frac 34 g([X^{\perp}, Y^{\perp}], [X^{\perp}, W])
$$
for every $W\in \mathfrak{g}$
 and $X^{\perp}, Y^{\perp},\in \xi^{\perp} $.
As a consequence, for every $v_i \in \xi^{\perp}$ we have
$$
g (T^B(X^{\perp},  v_i), T^B  (Y^{\perp}, v_i)) = \frac 43 g (R^g (J v_i, J X^{\perp}) J v_i, J Y^{\perp}).
$$
Moreover, by \cite[pag. 622]{Eberlein}
$$
\sum_{i =1}^{2n - p} g (R^g (v_i, X^{\perp}) Y^{\perp}, v_i) = \frac 34 \sum_{k = 1}^{2p} g (\iota (z_k)^2 X^{\perp}, Y^{\perp}).
$$
Therefore
$$
\sum_{i = 1}^{2n -p} g (T^B(X^{\perp},  v_i), T^B  (Y^{\perp}, v_i)) =  -  \sum_{k = 1}^{2p} g (\iota (Jz_k)^2 JX^{\perp}, JY^{\perp}) =  - 2 ric^g (J X^{\perp}, J Y^{\perp}).
$$
In this way
$$
(ric^B)^{1,1}  (X^{\perp}, Y^{\perp}) =   2 (ric^g)^{1,1}  (X^{\perp}, Y^{\perp}).
$$
\end{proof}

\begin{oss}
By \cite{Eberlein}  for  a  metric  $2$-step Lie algebra   $(\g, \mu, g)$  one has
 \begin{enumerate}
\item[1.] $ric^g (X, Z) = 0$ for all $X \in \xi$ and  $Z \in \xi^{\perp}$.
\item[2.] $ric^g (Z, Z^*) = -  \frac 14  {\rm tr} \,  \iota (Z) \iota ( Z^*) $ for $Z,Z^* \in \xi$.   In particular
$ric^g (Z,  Z) \geq 0$ for all $Z  \in \xi$  with equality if and only if  $ \iota(Z) = 0$.
\end{enumerate}
Moreover, giving a Hermitian structure $(g,J)$ on $\g$,
for $X\in \xi$ and $Y \in \xi^{\perp}$ we have
$$
g (T^B (X, e_i), T^B (Y, e_i)) = g (\nabla^B_X e_i - \nabla^B_{e_i} X, \nabla^B_Y e_i - \nabla^B_{e_i} Y - [Y, e_i])\,.
$$
If $e_i \in \xi$ then $$g (T^B (X, e_i), T^B (Y, e_i)) =0.$$ If $e_i \in \xi^{\perp}$ we have that $\nabla^B_X e_i - \nabla^B_{e_i} X \in \xi^{\perp}$ and $\nabla^B_Y e_i - \nabla^B_{e_i} Y - [Y, e_i] \in \xi$. So again  $g (T^B (X, e_i), T^B (Y, e_i))=0$ and summing up
$$
 \frac 14 \sum_{i = 1}^{2n} g (T^B (X, e_i), T^B (Y, e_i)) =0
$$
for every $X\in \xi$ and $Y^{\perp}\in \xi^{\perp}$.
There follows that
$$
(ric^B)^{1,1}  (X, Y^{\perp}) = 2 (ric^g)^{1,1}  (X^{\perp}, Y)
$$
for all $X\in \g$ and $Y\in \xi^{\perp}$, while
$$
(ric^B)^{1,1} _{\xi\times \xi}\neq 2 (ric^g)^{1,1}_{\xi\times\xi}\,.
$$

\end{oss}

\medskip
Let us consider now the space  $\mathcal N$ of all $2n$-dimensional nilpotent  Lie algebras equipped with a complex structure. Such a space can be seen as a subspace of the space $\mathcal A$ defined in \eqref{A}.   Let $\mu_0$ be an SKT bracket in $\mathcal{N}$ and let $\mu(t)$ be the solution to \eqref{bracketflow} satisfying $\mu(0)=\mu_0$. Then $\mu(t)$ is SKT for every $t$
and we have
$$
\frac{d}{dt}\langle \mu,\mu\rangle=2 \left\langle \frac{d}{dt}\mu,\mu\right\rangle=\left\langle \delta_{\mu}(P_\mu),\mu\right\rangle=-4\left\langle P_\mu,Ric_{\mu}\right\rangle
$$
where
$$
Ric_{\mu}=-\frac12 \sum_{i=1}^{2n}({\rm ad}_{\mu}E_i)^t{\rm ad}_{\mu}E_i+\frac14 \sum_{i=1}^{2n}{\rm ad}_{\mu}E_i({\rm ad}_{\mu}E_i)^{t} 
$$
is the usual Ricci operator induced by $\mu$ (see \cite[Lemma 4.2]{lauret2}) (here $\{E_k\}$ is the canonical basis of $\R^{2n}$). Using that $P_{\mu}$ is of type $(1,1)$ (i.e. that it commutes with $J_0$) and that $P_{\mu}$ vanishes along the center of $\mu$
$$
\xi_{\mu}=\left\{X\in \R^{2n}\,\,\mbox{ s.t. } \mu(X,Y)=0\quad \mbox{ for all }Y\in \R^{2n}  \right\}
$$
we have
$$
\frac{d}{dt}\langle \mu,\mu\rangle=-4\sum_k\left\langle P_{\mu}(e_k),(Ric_{\mu})^{1,1}(e_k)\right\rangle
$$
where $\{e_k\}$ is an arbitrary orthonormal basis to $\xi_\mu^{\perp}$. 
On the other hand Lemma \ref{anna1} and Theorem \ref{anna2}  imply
$$
\sum_k\left\langle P_{\mu}(e_k),(Ric_{\mu})^{1,1}(e_k)\right\rangle=2\left\langle P_\mu,P_{\mu}\right\rangle
$$
and so 
\begin{equation}
\label{enstimate}
\frac{d}{dt}\langle \mu,\mu\rangle=-8\langle P_{\mu},P_{\mu}\rangle\leq 0\,,
\end{equation}
which readily implies that in the nilpotent case the unique solution to the system \eqref{bracketflow} is defined for every positive $t$. This fact, together with Theorem \ref{lauret}, implies the statement of Theorem \ref{main1}.

\medskip

Moreover, we have the following
\begin{proposition}
In the nilpotent SKT case the maximal solution to \eqref{bracketflow} converges to the abelian bracket.
\end{proposition}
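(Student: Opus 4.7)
The plan is to combine the monotonicity estimate \eqref{enstimate} with a rigidity statement showing that $P_\mu = 0$ forces $\mu$ to be abelian. First, \eqref{enstimate} implies that $t\mapsto \|\mu(t)\|^2$ is non-increasing on $[0,\infty)$, hence converges to some limit $L\geq 0$; integrating \eqref{enstimate} yields $8\int_0^\infty \|P_{\mu(t)}\|^2\, dt = \|\mu_0\|^2 - L < \infty$, so one can extract $t_n\to\infty$ along which $P_{\mu(t_n)}\to 0$. Since $\|\mu(t_n)\|\leq \|\mu_0\|$ is bounded, the sequence $\{\mu(t_n)\}$ lies in a compact subset of the finite-dimensional space $V$ and, up to extraction, converges to a bracket $\mu_\infty$. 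The $2$-step nilpotent condition and the SKT condition are algebraic, hence closed in $\mathcal{A}$, so $\mu_\infty$ is again a $2$-step nilpotent SKT bracket, and continuity of $\mu\mapsto P_\mu$ then gives $P_{\mu_\infty}=0$.

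The heart of the argument is to show that $P_{\mu_\infty}=0$ implies $\mu_\infty \equiv 0$. By Lemma \ref{anna1} combined with Theorem \ref{anna2}, $\omega_0(P_{\mu_\infty}X, Y) = (\rho^B_{\mu_\infty})^{1,1}(X,Y) = 2 (ric^g_{\mu_\infty})^{1,1}(X^{\perp},(JY)^{\perp})$, so $P_{\mu_\infty}=0$ forces $(ric^g_{\mu_\infty})^{1,1}$ to vanish on $\xi^\perp\times\xi^\perp$ (using that $\xi$ is $J$-invariant in the nilpotent SKT case). By the Eberlein identity recalled in the preceding Remark, $ric^g(X,X) = -\tfrac12\sum_i\|\iota(z_i)X\|^2 \le 0$ for $X\in\xi^\perp$; hence $(ric^g)^{1,1}(X,X) = \tfrac12(ric^g(X,X) + ric^g(JX,JX)) = 0$ forces both $ric^g(X,X)=0$ and $ric^g(JX,JX)=0$, i.e.\ $\iota(z)X=0$ for every $z\in\xi$ and every $X\in\xi^\perp$. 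Since $\iota(z)X = -({\rm ad}_X)^*(z)$, this means ${\rm ad}_X=0$ for every $X\in\xi^\perp$, so $\mu_\infty\equiv 0$. Consequently $L=\|\mu_\infty\|^2=0$, $\|\mu(t)\|\to 0$ as $t\to\infty$, and $\mu(t)$ converges to the abelian bracket.

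The main obstacle I anticipate is the rigidity step just sketched: one must translate the Hermitian quantity $P_\mu$ into a negative semi-definite Riemannian Ricci tensor via Lemma \ref{anna1} and Theorem \ref{anna2}, and then upgrade the vanishing of the $(1,1)$-part to the vanishing of all commutators by evaluating $(ric^g)^{1,1}$ separately on $X$ and $JX$. Everything else is standard precompactness of bounded trajectories for a gradient-type flow in a finite-dimensional ambient space.
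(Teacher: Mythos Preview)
Your argument is correct, but it follows a genuinely different path from the paper's own proof. The paper proceeds by a direct differential inequality: starting from \eqref{enstimate}, it uses Lemma \ref{anna1} and Theorem \ref{anna2} to rewrite $\langle P_\mu,P_\mu\rangle$ in terms of $\sum_k\langle Ric_\mu^{1,1}(e_k),Ric_\mu^{1,1}(e_k)\rangle$, bounds this below by a multiple of $\bigl(\sum_k\langle Ric_\mu(e_k),e_k\rangle\bigr)^2$, and finally identifies this trace with $-\tfrac12\|\mu\|^2$ via the explicit formula for $Ric_\mu$ on a $2$-step algebra. This yields an ODE inequality of the form $\tfrac{d}{dt}\|\mu\|^2\le -c\,\|\mu\|^4$, which forces $\|\mu(t)\|^2\to 0$ with an explicit polynomial decay rate.

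Your route, by contrast, is a soft compactness-plus-rigidity argument: you extract a subsequential limit $\mu_\infty$ with $P_{\mu_\infty}=0$, and then invoke Eberlein's negativity of $ric^g$ on $\xi^\perp$ to deduce that $P_{\mu_\infty}=0$ forces $\mu_\infty=0$. This is perfectly valid; the closure of the Jacobi, $2$-step, integrability, and SKT conditions in $\mathcal A$ is exactly what you need to make the limit again an SKT nilpotent bracket (so that Lemma \ref{anna1}, Theorem \ref{anna2}, and the $J$-invariance of the center all apply to $\mu_\infty$). What the paper's approach buys is a quantitative decay rate $\|\mu(t)\|^2=O(1/t)$, which your compactness argument cannot see. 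What your approach buys is a clean rigidity statement---$P_\mu=0$ on an SKT nilpotent algebra implies abelian---which is of independent interest and avoids the trace inequality step.
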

\begin{proof}
Let $\mu(t)$ be the maximal solution to \eqref{bracketflow}. We prove that $\|\mu(t)\|^2$ tends to zero when $t$ tends to infinity.  In view of \eqref{enstimate} we have 
$$
\frac{d}{dt}\langle \mu,\mu\rangle=-8\langle P_{\mu},P_{\mu}\rangle=-2 \sum_{k}\langle Ric_{\mu}^{1,1}(e_k),Ric_{\mu}^{1,1}(e_k)\rangle\leq -2 \left(\sum_k \langle Ric^{1,1}_\mu(e_k),e_k \rangle \right)^2 
$$
where $\{e_k\}$ is an arbitrary orthonormal basis of $\xi_{\mu}^{\perp}$. Since $\xi_\mu^{\perp}$ is $J_0$-invariant, then 
$$
\sum_k \langle Ric^{1,1}_\mu(e_k),e_k \rangle =\sum_k \langle Ric_\mu(e_k),e_k \rangle
$$
i.e., 
$$
\frac{d}{dt}\langle \mu,\mu\rangle\leq -2\left(\sum_k \langle Ric_\mu(e_k),e_k \rangle\right)^2
$$
From the definition of $Ric_\mu$ and taking into account that the $e_k$'s belong to the orthogonal complement $\xi^{\perp}_{\mu}$ to the center of $\mu$, we have 
$$
\frac{d}{dt}\langle \mu,\mu\rangle\leq -2\left(\sum_k \langle Ric_\mu(e_k),e_k \rangle\right)^2=-\frac{1}{2}\left(\sum_{i,k}\langle \mu(e_i,e_k),\mu(e_i,e_k)\rangle\right)^2=-\frac{1}{2}\|\mu\|^4
$$
and the claim follows. 
\end{proof}

\begin{example}
Here we study the basic Example \ref{KT} in terms of bracket flow.
The starting bracket is
$$
\mu_0=-\frac12\, \zeta^1\wedge\zeta^{\bar 1}\otimes Z_2+\frac12\, \zeta^1\wedge\zeta^{\bar 1}\otimes Z_{\bar 2}
$$
which corresponds to the bracket of the Lie algebra $\mathfrak{h}_3\oplus \R$.  Since the bracket flow  preserves the center, we look for a solution $\mu$ to \eqref{bracketflow}  taking value only at $(Z_1,Z_{\bar 1})$, i.e.
$$
\mu=\mu_{1\bar 1}^2\, \zeta^1\wedge\zeta^{\bar 1}\otimes Z_2+\mu_{1\bar 1}^{\bar 2}\, \zeta^1\wedge\zeta^{\bar 1}\otimes Z_{\bar 2}\,.
$$
For such a bracket we have
$$
\rho^B_{\mu}=-2i\,|\mu_{1\bar 1}^2|^2\, \zeta^1\wedge\zeta^{\bar 1}
$$
and
$$
P_{\mu}=-2\,|\mu_{1\bar 1}^2|^2\, \zeta^1\otimes Z_{ 1}+2\,|\mu_{1\bar 1}^2|^2\, \zeta^{\bar 1}\otimes Z_{\bar 1}\,.
$$
Therefore
$$
\delta_{\mu}(P_{\mu})(Z_1,Z_{\bar 1})=2\mu(P_{\mu}(Z_1),Z_{\bar 1})=-4 |\mu_{1\bar 1}^2|^2 \mu(Z_1,Z_{\bar 1})
$$
and the corresponding bracket flow equation is
\begin{equation}\label{z}
\begin{cases}
\dot{z}=-2 |z|^2\,z,\\
z(0)=-\frac12
\end{cases}
\end{equation}
where $z=\mu_{1\bar 1}^2$.  Since \eqref{z} has as solution the real function
$$
z(t)=-\frac{1}{2(t+1)^{\frac12}}
$$
the solution $\mu(t)$ of the bracket flow is defined for every positive $t$ and converges in $\mathcal{A}$ to the null bracket corresponding to the abelian Lie algebra.
\end{example}

\section{Evolution of  Tamed Symplectic forms on a complex manifold}

Let $(M,J)$ be a complex manifold. We recall that a symplectic form $\Omega$ on $M$ {\em tames} $J$ if
\begin{equation}\label{hs}
\Omega(JX,X)>0
\end{equation}
for every non-zero tangent vector field $X$ on $M$. Such a condition is weaker than the compatibility of $\Omega$ with $J$ since in this case the positive tensor induced by \eqref{hs} is not symmetric. A structure $(J,\Omega)$
composed by a complex structure and a taming symplectic form was  called in \cite{ST} a {\em Hermitian-symplectic} structure. Such a structure arises  considering static solutions of  the pluriclosed flow \eqref{flowST}. Indeed if an SKT form $\omega$ satisfies the Hermitian-Einstein equation $r\,\omega=(\rho^B)^{1,1}(\omega)$ with $r\in \R$ and  $r\neq 0$, then $\Omega=\frac{1}{r}\rho^B$ is a symplectic form taming $J$.

 In \cite{EFV} it was observed that Hermitian-symplectic structures are actually special SKT structures. This is because  given a symplectic form $\Omega$
taming
$J$ and considering  the decomposition of $\Omega$ in complex-type forms
$$
\Omega=\omega +\beta+\bar{\beta}\in \Lambda^{1,1}\oplus \Lambda^{2,0}\oplus \Lambda^{0,2}
$$
one has that $d\Omega$ vanishes if and only if $\beta$ solves
\begin{equation}\label{beta}
\begin{cases}
\bar{\partial}\Omega^{11}=-\partial \beta\\
\bar{\partial}\beta=0\,.
\end{cases}
\end{equation}
In the sequel  of the paper  we are going to take into account the  following evolution equation
\begin{equation}\label{eq:HSflow}
 \begin{cases}
  \frac{d}{d t}\Omega=-\rho^B (\omega)\\
  \Omega(0)=\Omega_0,
 \end{cases}
\end{equation}
which we will call the \em{Hermitian-symplectic} (or simply \em{HS}) flow.

\begin{proposition}\label{HS}
 Let $\Omega_0$  be a   tamed symplectic form on a compact complex manifold $(M,J)$. Then short-time existence of a solution $\Omega(t)$ of \eqref{eq:HSflow} is guaranteed. Moreover, $\Omega(t)$ is a symplectic form taming $J$ for every $t$.
\end{proposition}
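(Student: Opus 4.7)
The plan is to exploit the bidegree decomposition $\Omega=\omega+\beta+\bar\beta$, with $\omega=\Omega^{1,1}$ real and $\beta\in\Lambda^{2,0}$, and to reduce the flow \eqref{eq:HSflow} to the pluriclosed flow for $\omega$ together with an ODE for $\beta$. Since $\rho^B(\omega)$ depends only on the Hermitian structure determined by $\omega$ (and not on $\beta$), the flow splits according to type as
$$
\tfrac{d}{dt}\omega=-(\rho^B)^{1,1}(\omega),\qquad \tfrac{d}{dt}\beta=-(\rho^B)^{2,0}(\omega),
$$
the first equation being exactly \eqref{flowST} and the second a first-order linear ODE for $\beta$ driven entirely by $\omega(t)$.

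First I would verify that $\omega_0:=\Omega_0^{1,1}$ is SKT: the taming assumption gives pointwise positivity of $\omega_0$ as the fundamental form of a Hermitian metric, and $d\Omega_0=0$ together with \eqref{beta} yields $\partial\bar\partial\omega_0=0$. By the Streets--Tian short-time existence theorem for the pluriclosed flow there exist $T>0$ and a unique smooth SKT solution $\omega(t)$ on $[0,T)$ with $\omega(0)=\omega_0$. I would then define
$$
\beta(t):=\beta_0-\int_0^t(\rho^B)^{2,0}(\omega(s))\,ds,
$$
and set $\Omega(t):=\omega(t)+\beta(t)+\overline{\beta(t)}$, which by construction is a real $2$-form solving \eqref{eq:HSflow}.

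The remaining verifications are closedness and the taming of $J$. Differentiating gives $\tfrac{d}{dt}d\Omega(t)=-d\rho^B(\omega(t))$, and $d\rho^B=0$ follows from \eqref{fundamental2} combined with the $d$-closedness of the Chern Ricci form; since $d\Omega_0=0$, this yields $d\Omega(t)\equiv 0$. For the taming property, a direct computation with the bidegree decomposition shows $\beta(X,JX)+\bar\beta(X,JX)=0$ for every real tangent vector $X$ (the four contributions $\beta(Z,Z)$, $\beta(Z,\bar Z)$, $\beta(\bar Z,Z)$, $\beta(\bar Z,\bar Z)$ all vanish for $\beta$ of type $(2,0)$), so $\Omega(t)(X,JX)=\omega(t)(X,JX)$; positivity of $\omega_0$ and continuous dependence in $t$ ensure positivity of $\omega(t)$ on a (possibly smaller) interval, which gives both the taming of $J$ and the non-degeneracy of $\Omega(t)$, hence its symplecticity.

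The only nontrivial ingredient is the short-time existence for the pluriclosed flow, which rests on the strong parabolicity (modulo gauge) of $g\mapsto -S(g)+Q(g)$ established by Streets and Tian; everything else is a bookkeeping argument in the bidegree decomposition plus integration of a first-order linear ODE for $\beta$. A subtle but essential check along the way is that the $(1,1)$ and $(2,0)$ components of the system genuinely decouple, i.e.\ that $\rho^B$ is a function of $\omega$ alone, which is immediate from the fact that the Bismut connection is determined by $(J,g)$ with $g=\omega(\cdot,J\cdot)$.
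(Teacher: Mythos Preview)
Your proof is correct and follows essentially the same route as the paper: decompose $\Omega$ by type, identify the $(1,1)$-part with the pluriclosed flow, integrate the $(2,0)$-part as an ODE driven by $\omega(t)$, and then check closedness via $d\rho^B=0$ and taming via positivity of $\omega(t)$. Your write-up is in fact slightly more explicit in places (verifying that $\omega_0$ is SKT, explaining why $d\rho^B=0$, and spelling out $\Omega(t)(X,JX)=\omega(t)(X,JX)$); the only remark is that you need not shrink the interval for positivity, since the Streets--Tian existence theorem already returns $\omega(t)$ as the fundamental form of a Hermitian metric on $[0,T)$.
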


\begin{proof}
We can write $\Omega_0=\omega_0+\beta_0+\bar\beta_0$ and the Hermitian-symplectic flow decomposes in its $(1,1)$-part
\begin{equation}\label{eq:SKTflow}
  \begin{cases}
  \frac{d}{d t}\omega=-(\rho^B)^{1,1}(\Omega^{1,1}) \\
  \omega(0)=\omega_0
  \end{cases}
\end{equation}
and the $(2,0)$-part
\begin{equation}\label{eq:betaFlow}
 \begin{cases}
\frac{d}{d t}\beta=-(\rho^B)^{2,0}(\omega) \\
\beta(0)=\beta_0. \\
\end{cases}
\end{equation}
Since \eqref{eq:SKTflow} is the \lq\lq usual'' pluriclosed flow, it admits  a solution $\omega(t)$ defined in an interval $[0,\varepsilon)$, for $\varepsilon$ small enough. On the other hand, since $(\rho^B)^{2,0}(\omega)$ does not depend on $\beta$,
$$
\beta(t)=\beta_0+\int_{0}^t (\rho^B)^{2,0}(\omega)(s)\,ds
$$
is a solution to \eqref{eq:betaFlow} and $\Omega(t):=\omega(t)+\beta(t)$ provides the unique solution to \eqref{eq:HSflow}.

We finally observe that the taming  condition is preserved by the flow. Indeed, $\omega(t)$ is positive since it is a solution to the pluriclosed flow  and $\Omega(t)$ is closed
since
\[
\frac{d}{dt}\left(d\Omega(t)\right) = d\, \left ( \frac{d}{dt}\Omega(t) \right )= -d\rho^B = 0,
\]
and then $d\Omega(t)$ is constant.
\end{proof}

The previous result says that the pluriclosed flow preserves the Hermitian-symplectic condition.
Indeed, a Hermitian-symplectic structure can be defined as an SKT structure $(\omega_0,J)$ together a solution $\beta$ to \eqref{beta}.  As a consequence of Proposition \ref{HS}  we have that if  an SKT form $\omega_0$ admits a solution $\beta_0$ to \eqref{beta}, then the solution  $\omega(t)$ to the pluriclosed flow with initial condition $\omega_0$ has  a solution $\beta(t)$ for every $t$.

We recall the following stability theorem for the Hermitian curvature flow \eqref{HCF} obtained by Streets and Tian

\begin{theorem} $($\cite{ST2}$)$ \label{st} Let $(M, \tilde g, J)$ be a complex manifold with a K\"ahler-Einstein metric
$\tilde g$ and $c_1(M) < 0$ or $c_1(M) = 0$. Then there exists $\epsilon =\epsilon(\tilde g)$ so that if $g_o$ is a $J$-Hermitian metric on $M$  and $\|\tilde{g} - g_o\|_{C^{\infty}} < \epsilon$, then the solution to
\eqref{HCF} with initial condition $g_o$ exists for all time and converges {\em exponentially } to a K\"ahler-Einstein metric.
\end{theorem}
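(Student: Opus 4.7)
The plan is to linearize the Hermitian curvature flow at the K\"ahler--Einstein fixed point and apply a standard parabolic stability argument in a high Sobolev or H\"older space.

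First, I would verify that, after a suitable normalization by rescaling in the case $c_1(M)<0$, the metric $\tilde g$ is a stationary point of \eqref{HCF}. For a K\"ahler metric the Chern torsion vanishes, so $Q(\tilde g)=0$ and $S(\tilde g)$ reduces to the ordinary Ricci tensor; the Einstein condition $\mathrm{Ric}(\tilde g)=\lambda\,\tilde g$ then makes $\tilde g$ stationary when $c_1(M)=0$ (where $\lambda=0$) and stationary up to scaling when $c_1(M)<0$, which the normalization removes.

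Next I would write $g=\tilde g+h$ with $h$ a Hermitian symmetric $2$-tensor and compute $\tfrac{d}{dt}h = Lh+N(h)$, where $N$ is at least quadratic in $h$ and $L$ is a second-order linear operator. Since Streets--Tian proved that $-S(g)+Q(g)$ is strongly elliptic, $L$ is strongly elliptic; moreover HCF is the gradient flow of the functional $\mathbb F$, so $L$ is self-adjoint with respect to the natural $\tilde g$-inner product on Hermitian $2$-tensors and its spectrum is real. The heart of the argument is the spectral claim
\begin{equation*}
\ker L \;=\; T_{\tilde g}\mathcal{KE},\qquad L\le -c\ \text{on}\ (\ker L)^{\perp}\ \text{for some}\ c>0,
\end{equation*}
where $\mathcal{KE}$ denotes the local moduli of K\"ahler--Einstein metrics on $(M,J)$ near $\tilde g$. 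For $c_1(M)<0$, Aubin--Yau uniqueness forces this moduli to be trivial (a single point after normalization); for $c_1(M)=0$ it is the finite-dimensional space of Ricci-flat K\"ahler deformations, smooth by Bogomolov--Tian--Todorov.

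With the spectral gap in hand, exponential stability follows from standard semigroup or maximal regularity arguments: decompose $h(t)=h_{0}(t)+h_{1}(t)$ along $\ker L$ and its $L^{2}$-orthogonal complement, apply the variation of constants formula in a Sobolev space where $e^{tL}$ contracts at rate $e^{-ct}$ on $(\ker L)^{\perp}$, absorb the nonlinearity by Moser-type estimates using the smoothing of the parabolic semigroup, and bootstrap to $C^{\infty}$ convergence; the limit lies in $\ker L$ and is therefore K\"ahler--Einstein. The main obstacle is the spectral analysis of $L$ on the \emph{full} space of Hermitian perturbations rather than only the K\"ahler ones: stability of K\"ahler--Ricci flow near $\tilde g$ takes care of the K\"ahler directions, but the non-K\"ahler Hermitian directions are damped only through the $Q(g)$ correction, so one has to check that $Q$ contributes a strictly negative eigenvalue on infinitesimal deformations that fail to be $(1,1)$-closed. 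This is precisely where the specific choice of $Q$ in \cite{ST2}---tailored so that $\mathbb F$ is a genuine functional whose Hessian controls $L$---delivers the required extra negativity.
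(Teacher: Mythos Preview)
This theorem is not proved in the present paper at all: it is stated with the attribution ``(\cite{ST2})'' and used as a black box, with the subsequent Corollary being the only place where any argument is given. So there is no proof in the paper to compare your proposal against.

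That said, your sketch is a reasonable outline of the Streets--Tian stability argument, but one point deserves a caution. You assert that the linearization $L$ is self-adjoint because HCF is a gradient flow. Gradient flows have self-adjoint linearization only when the flow is the $L^2$-gradient with respect to the \emph{same} inner product you use to define the adjoint; the functional $\mathbb{F}$ in \cite{ST2} is of Perelman type (involving an auxiliary function $f$) and the gradient structure is more delicate than a plain $L^2$-gradient, so self-adjointness of $L$ on the full space of Hermitian tensors is not automatic and in fact Streets--Tian do not argue via a spectral decomposition of a self-adjoint $L$. Their actual route is closer to the Guenther--Isenberg--Knopf/\v{S}e\v{s}um scheme: they work with a DeTurck-modified flow, establish strict linear stability by direct energy estimates (integration by parts using the K\"ahler--Einstein background), and then invoke a Simon-type theorem to pass from linear to nonlinear exponential convergence. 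The ``extra negativity from $Q$ on non-K\"ahler directions'' that you flag as the crux is indeed the key computation, but it is obtained by an explicit integration-by-parts identity rather than by abstract Hessian considerations.
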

\begin{corollary}
In the hypothesis of Theorem $\ref{st},$ let $\Omega_o$ be a symplectic form on $M$ taming $J$ and such that $\|\tilde{g} - g_o\|_{C^{\infty}} < \epsilon$, where $g_o$ is the Hermitian metric of $\Omega^{1,1}_o$. Then the solution $\Omega(t)$ of flow \eqref{eq:HSflow} with initial condition $\Omega(0)=\Omega_o$ is defined for every $t\in [0,\infty)$ and  converges to a symplectic form whose $(1,1)$-component induces a K\"ahler-Einstein metric.
\end{corollary}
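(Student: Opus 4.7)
The plan is to reduce the problem to Theorem~\ref{st} by means of the type decomposition $\Omega(t)=\omega(t)+\beta(t)+\bar\beta(t)$ used in Proposition~\ref{HS}. First I would note that $\omega_o=\Omega_o^{1,1}$ is SKT: decomposing $d\Omega_o=0$ by bidegree gives $\partial\omega_o=-\bar\partial\beta_o$ and $\partial\beta_o=0$, so $\partial\bar\partial\omega_o=-\bar\partial^{2}\beta_o=0$. Since on SKT metrics the pluriclosed flow coincides with the Hermitian curvature flow \eqref{HCF}, and since the hypothesis $\|\tilde g-g_o\|_{C^\infty}<\epsilon$ is exactly the assumption of Theorem~\ref{st}, the $(1,1)$-component $\omega(t)$ of the HS flow is defined for all $t\in[0,\infty)$ and converges exponentially in $C^\infty$ to a K\"ahler-Einstein metric $\omega_\infty$.

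Next I would handle the $(2,0)$-part. As in the proof of Proposition~\ref{HS}, once $\omega(t)$ is known one has $\beta(t)=\beta_o-\int_0^{t}(\rho^B)^{2,0}(\omega(s))\,ds$, so long-time existence of $\Omega(t)$ follows immediately from that of $\omega(t)$. To obtain a limit, I would use the fact that $(\rho^B)^{2,0}$ vanishes identically on K\"ahler metrics (for such metrics $\rho^B$ coincides with the Chern Ricci form, which is of type $(1,1)$). Combined with the exponential $C^\infty$-convergence $\omega(t)\to\omega_\infty$ granted by Theorem~\ref{st} and the smooth dependence of $(\rho^B)^{2,0}$ on $\omega$, this should yield an estimate of the form $\|(\rho^B)^{2,0}(\omega(t))\|_{C^k}\le C_k\,e^{-\lambda t}$ for every $k$, from which the above integral converges in $C^\infty$ and $\beta(t)\to\beta_\infty$.

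It then remains to identify the limit $\Omega_\infty:=\omega_\infty+\beta_\infty+\bar\beta_\infty$. Closedness is automatic: along the flow $\frac{d}{dt}d\Omega(t)=-d\rho^B=0$, so $d\Omega(t)\equiv d\Omega_o=0$, and passing to the limit gives $d\Omega_\infty=0$. For the taming condition I would use that any $(2,0)$-form $\beta$ satisfies $\beta(JX,X)=i\beta(X,X)=0$, so
\[
\Omega_\infty(JX,X)=\omega_\infty(JX,X)=g_\infty(X,X)>0
\]
for every nonzero tangent vector $X$, where $g_\infty$ is the positive-definite metric associated with the K\"ahler-Einstein form $\omega_\infty$. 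The main obstacle I foresee is precisely the quantitative control on $(\rho^B)^{2,0}(\omega(t))$ needed for the convergence of $\beta(t)$: this is the only step that is not formal, but it should follow from a Taylor expansion of the nonlinear operator $\omega\mapsto(\rho^B)^{2,0}(\omega)$ at the K\"ahler point $\omega_\infty$ combined with the exponential decay supplied by Theorem~\ref{st}.
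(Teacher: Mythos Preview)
Your proposal is correct and follows essentially the same route as the paper: decompose $\Omega(t)=\omega(t)+\beta(t)+\bar\beta(t)$, apply Theorem~\ref{st} to the $(1,1)$-part, integrate the $(2,0)$-part, and check closedness of the limit. The only substantive difference is in how you justify the exponential decay of $(\rho^B)^{2,0}(\omega(t))$: the paper writes down the explicit local formula
\[
(\rho^B)^{2,0}(\omega)=-\tfrac{i}{2}\,\partial_{z^a}\!\big(g^{k\bar l}(g_{b\bar l,k}-g_{k\bar l,b})\big)\,dz^a\wedge dz^b,
\]
from which one reads off an estimate of the form $\|(\rho^B)^{2,0}(\omega)\|_{C^k}\le C_k\sum_{i+j=k+1}\|\omega\|_{C^i}\|\partial\omega\|_{C^j}$ and then uses that $\partial\omega(t)\to 0$ exponentially because the limit is K\"ahler. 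Your Taylor-expansion argument at the K\"ahler point $\omega_\infty$ is the abstract version of the same observation and is equally valid; the explicit formula just makes the dependence on $\partial\omega$ manifest. Your additional remarks (verifying that $\omega_o$ is SKT so that Theorem~\ref{st} applies via the HCF equivalence, and checking the taming condition of $\Omega_\infty$ directly via $\beta(JX,X)=0$) are correct and in fact more complete than what the paper spells out.
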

\begin{proof}
Let $\omega_o=\Omega_o^{1,1}$. Then using Theorem \ref{st} we have that the equation
\begin{equation*}
  \begin{cases}
  \frac{d\omega}{d t}=-(\rho^B)^{1,1}(\omega) \\
  \omega(0)=\Omega_o^{1,1}
  \end{cases}
\end{equation*}
has a unique solution $\omega(t)$ defined in $[0,\infty)$ and converging exponentially to a K\"ahler-Einstein structure $\omega_{\infty}$. Since $\omega(t)$ is defined in $[0,\infty)$, the system
\begin{equation*}
 \begin{cases}
  \frac{d\beta}{d t}=-(\rho^B)^{2,0}(\omega) \\
  \beta(0)=\Omega_o^{2,0}
 \end{cases}
\end{equation*}
has a solution $\beta(t)$ in $[0,\infty)$ which can be written as
$$
\beta(t)=\int_0^{t}f(s)\,ds+\Omega_o^{2,0}
$$
$f(s)$ being
$$
f(s)=-(\rho^B)^{2,0}(\omega(s))\,.
$$
We claim that  $f(s)$ converges exponentially to $0$. This last assertion can be proved as follows: Let $g$ be an arbitrary $J$-Hermitian metric on $(M,J)$ with fundamental form $\omega$. 
Then a standard computation yields that in local complex coordinates we have
$$
(\rho^B)^{2,0}(\omega)=-\frac{i}{2}\,\partial_{z^a}(g^{k\bar l}(g_{b\bar l,k}-g_{k\bar l,b}))\, dz^a\wedge dz^b \,,
$$  
Therefore we have the estimates  
$$
\|(\rho^B)^{2,0}(\omega)\|_{C^k}\leq C_k\,\sum_{i+j=k+1}\|\omega\|_{C^{i}}\|\partial\omega\|_{C^{j}}
$$
where all the $C^k$-norms are computed with respect to $\tilde g$. 
Now, since  $\omega(t)$ converges exponentially to $\omega_{\infty}$ and $\omega_{\infty}$ is closed, we have that 
$\partial \omega(t)$ converges exponentially to $0$ in the $C^{\infty}$-norm.  On the other hand 
$$
\|\omega(t)\|_{C^k}\leq \tilde C_k e^{-\lambda_kt}+\|g_{\infty}\|_{C^k}
$$
for a suitable constants $\tilde C_k$ and $\lambda_k$. It follows that $f(s)$ converges to $0$ in $C^{\infty}$-norm, i.e. for every positive integer $k$ there exists suitable constants $B_k$ and $\mu_k$ such that 
$$
\|f(s)\|_{C^k}\leq B_k\, e^{-\mu_kt}\,.
$$  
Therefore $\beta(t)$ converges in $C^{\infty}$-norm to
$$
\beta_{\infty}:=\int_0^{\infty}f(s)\,ds+\Omega_o^{2,0}\,.
$$ 
and \eqref{eq:HSflow} has a unique solution $\Omega(t)$ for $t\in[0,\infty)$ converging to 
$$
\Omega_{\infty}:=\omega_{\infty}+\beta_{\infty}+\bar\beta_{\infty}\,.
$$
Finally,  since $\Omega(t)$ is closed for every $t$, its limit is a symplectic form, as required. 

\begin{oss}
Generically we do not expect that $\beta(t)$ converges to zero. A trivial counterexample is the following:\\
consider the standard complex torus $\mathbb{T}^{2n}=\C^n/\R^{2n}$ with the standard flat K\"ahler structure $\omega_0=-i\sum dz^r\wedge d\bar z^r$. Then $\Omega_0=\omega_0+dz^1\wedge dz^2+d\bar z^1\wedge d\bar z^2$ is a Hermitian-symplectic
structure and
$
\Omega(t)\equiv \Omega_0
$
solves the flow \eqref{eq:HSflow}.
\end{oss}

\end{proof}

\subsection{Flow  \eqref{eq:HSflow} on Lie algebras}
Let $(\g,\mu)$ be a Lie algebra endowed with a complex structure $J$. Let $\{Z_r\}$ an arbitrary $(1,0)$-frame with dual frame $\{\zeta^k\}$.  Every Hermitian  inner product  $g$ on $(\g, \mu, J)$ can be written as
$$
g=g_{r\bar k}\,\zeta^{r} \zeta^{\bar k}\,,
$$
for some real coefficients $(g_{r\bar k})$. The  inner product  $g$ induces the fundamental form
$$
\omega=-i\,g_{r\bar k}\,\zeta^{r}\wedge\zeta^{\bar k}\,.
$$
Therefore an arbitrary non-degenerate $2$-form $\Omega$ dominating $J$ cane written as 
$$
\Omega=-i\,g_{r\bar k}\,\zeta^{r}\wedge\zeta^{\bar k}+\beta_{ij}\, \zeta^{i}\wedge \zeta^{j}+\bar \beta_{ij}\,  \zeta^{\bar i}\wedge  \zeta^{\bar j}
$$
Using equations \eqref{unitary}, we get that the problem \eqref{eq:HSflow} is equivalent to the following system
\begin{equation}\label{eqalgebras}
\begin{cases}
\frac{d}{dt}g_{i\bar j}=- \mu_{i\bar j }^a \mu_{ar}^{r}+\mu_{i\bar j }^ag^{r\bk}\mu_{r\bk}^{\bar l} g_{a\bar l}+
                 \mu_{i\bar j }^{\bar b}\mu_{\bar b \bar r}^{\bar r}-\mu_{i\bar j }^{\bar b}g^{k \br}\mu_{k \bar r }^{ l} g_{l\bar b}\\

\frac{d}{dt}\beta_{ij}=-i \mu_{ij }^a \mu_{ar}^{r}+i\mu_{ij }^ag^{r\bk}\mu_{r\bk}^{\bar l} g_{a\bar l}
                 +i\mu_{i j }^{\bar b}\mu_{\bar b \bar r}^{\bar r}-i\mu_{i j }^{\bar b}g^{k \br}\mu_{k \bar r }^{ l} g_{l\bar b}\\

                 g_{i\bar j}(0)=h_{i\bar j}\\
                 \beta_{i j}(0)=h_{i j}
\end{cases}
\end{equation}
where
$$
\Omega_0=-ih_{i\bar j}\zeta^{i}\wedge \zeta^{\bar j}+h_{rs}\zeta^{r}\wedge \zeta^{\bar s}+\bar{h}_{lm}\zeta^{\bar l}\wedge \zeta^{\bar m}
$$
is the starting  symplectic form  taming $J$ and
$$
\Omega=-ig_{i\bar j}\zeta^{i}\wedge \zeta^{\bar j}+\beta_{rs}\zeta^{r}\wedge \zeta^{\bar s}+\bar \beta_{lm}\zeta^{\bar l}\wedge \zeta^{\bar m}
$$
is the solution to \eqref{eq:HSflow}.

In real dimension four, the  equations \eqref{eqalgebras} can be simplified by writing
every $J$-Hermitian inner product  on $\g$ in matrix notation  as
$$
(g_{i\bar j})=\left(\begin{array}{cc}
x &z\\
\bar z & y\\
\end{array}
\right)
$$
where $x,y$ are positive real numbers and $z\in \C$ satisfying
$$
xy-|z|^2>0\,.
$$
In this way the inverse of $g$ is
$$
\left(g^{\bar ji}\right)=\frac{1}{xy-|z|^2}\left(\begin{array}{cc}
y &-z\\
-\bar z & x\\
\end{array}
\right)\,.
$$
\begin{example} Consider the solvable Lie algebra $\g$ with structure equations
$$
(24,-14,0,0)
$$
endowed with the complex structure
$$
J(e_1)=e_2\,,\quad J(e_3)=e_4\,.
$$
Let $\{Z_1,Z_2\}$ be the $(1,0)$-frame
$$
Z_1=\frac12 (e_1-ie_2)\,,\quad  Z_2=\frac12 (e_3-ie_4)\,;
$$
then
$$
[Z_1, Z_{\bar 1}]=[Z_2,Z_{\bar 2}]=0\,,\quad
[Z_1,Z_{\bar 2}]=-\frac{1}{2}\,Z_{1}\,,\quad [Z_{\bar 1},Z_{2}]=-\frac12\,Z_{\ov 1}\,,\quad
[Z_1,Z_2]=\frac12 Z_1\,.
$$

Consider the $(1,0)$-coframe
$$
\zeta^1=e^1+ie^2\,,\quad \zeta^2=e^3+i e^4
$$
dual to $\{Z_1,Z_2\}$. Then
$$
d\zeta^1=-i\,\zeta^1\wedge e^4\,,\quad d\zeta^2=0\,.
$$
There follows
$$
d\zeta^1=-\frac12\,\zeta^{12}+\frac12\zeta^{1\bar 2}\,,
$$
i.e.
$$
\partial \zeta^1=- \frac12\,\zeta^{12}\,,\quad \bar \partial \zeta^1= \frac{1}{2}\,\zeta^{1\bar 2}\,.
$$
The generic  $2$-form taming  the complex structure  $J$ is
$$
\tilde\Omega= -i x^2\, \zeta^{1\bar 1}- i y^2\, \zeta^{2\bar 2}-iz\, \zeta^{1\bar 2}-i\bar z \, \zeta^{ 2\bar 1}+w \, \zeta^{1 2}+\bar w\, \zeta^{\bar 1\bar 2}
$$
where $x,y\in \R$ and $z\,,w\in \C$ satisfy
$$
x^2y^2-|z|^2>0\,,
$$
Moreover, the closure of $\tilde \Omega$ implies $iz=w$, i.e. 
$$
\tilde\Omega= -i x^2\, \zeta^{1\bar 1}- i y^2\, \zeta^{2\bar 2}-iz\, \zeta^{1\bar 2}-i\bar z \, \zeta^{ 2\bar 1}+iz \, \zeta^{1 2}-i\bar z\, \zeta^{\bar 1\bar 2}
$$ 
The  Bismut Ricci form  with respect to $\tilde \omega:=\tilde \Omega^{1,1}$ is then given by
$$
\rho^B(\tilde{\omega})=i\frac{zx^2}{4(x^2y^2-|z|^2)}\,\zeta^{12}-i\frac{zx^2}{4(x^2y^2-|z|^2)}\,\zeta^{1\bar 2}-i\frac{\bar zx^2}{4(x^2y^2-|z|^2)}\,\zeta^{\bar 1\bar 2}+i\frac{{\bar z} x^2}{4(x^2y^2-|z|^2)}\,\zeta^{\bar 1 2}
$$
The HS flow reduces  to
$$
\begin{cases}
\dot x = \dot y =0,\\
\dot z=-\frac{zx^2}{4(x^2y^2-|z|^2)}\,,
\end{cases}
$$
with initial conditions $x(0) = x_o, y(0) =y_o, z(0) = z_o$. In particular $x$ and $y$ have to be constant and 
our system reduces to   
$$
\dot z=-\frac{zx_o^2}{4(x_o^2y_o^2-|z|^2)}\,,\quad z(0):=z_o\,.
$$
This last equation is radial in the sense that
its solutions $z=\rho\,e^{i\theta}$ have $\theta$ constant and our problem reduces to  
\begin{equation}\label{rho}
\dot \rho=-\frac{\rho x_o^2}{4(x_o^2y_o^2-\rho^2)}\,,\quad \rho(0)=\rho_o
\end{equation}
in terms of an unknown real function $\rho$. If $\rho_o$ is vanishing, then \eqref{rho} has solution $\rho\equiv 0$; otherwise its solution $\rho$ is defined and strictly positive in  $[0,\infty)$ and satisfies 
$$
\frac{\rho^2}{2x_o^2}-y^2_o\log (\rho)-\frac{\rho_o}{x_o^2}+\frac{y_o^2}{\rho_o^2}=\frac{t}{4}\,.
$$
This last relation ensures that $\rho$ tends to zero when $t$ tends to infinity. Therefore we have
$$
\begin{cases}
z_{\infty}=0\\
w_{\infty}=0\,,
\end{cases}
$$
and thus
$$
\Omega_{\infty}=-i x_o^2\, \zeta^{1\bar 1}-  i y_o^2\, \zeta^{2\bar 2}.
$$
\end{example}

\end{document}